\newtheorem{theorem}{Theorem}[section]
\newtheorem{proposition}[theorem]{Proposition}
\newtheorem{lemma}[theorem]{Lemma}
\newtheorem{example}[theorem]{Example}
\newtheorem{definition}[theorem]{Definition}
\newtheorem{remark}[theorem]{Remark}
\newtheorem{conjecture}[theorem]{Conjecture}
\def\Spec{\mathcal S}
\def\Comp{\mathcal C}
\def\Reac{\mathcal R}
\def\Net{\{ \Spec, \Comp,\Reac \}}
\newcommand\scalemath[2]{\scalebox{#1}{\mbox{\ensuremath{\displaystyle #2}}}}
\journal{~}
\date{Apr.\ 21, 2016}
\begin{document}

\begin{frontmatter}

\title{Analyzing Multistationarity in Chemical Reaction Networks using the Determinant Optimization Method}





\author[add2]{Bryan \textsc{F\'{e}lix}}
\address[add2]{Department of Mathematics, University of Texas at Austin, RLM 8.100, 2515 Speedway Stop C1200, Austin, Texas 78712-1202, USA} 
\ead{bryanfelixg@gmail.com}

\author[add3]{Anne \textsc{Shiu}}
\address[add3]{Department of Mathematics, Texas A\&M University, Mailstop 3368, College Station, Texas 77843--3368, USA} 
\ead{annejls@math.tamu.edu}

\author[add4]{Zev \textsc{Woodstock}\corref{cor5}}
\address[add4]{Department of Mathematics and Statistics, James Madison University, Roop Hall 305, MSC 1911, Harrisonburg, Virginia 22807, USA}
\cortext[cor1]{Please send correspondence to ZW.}
\ead{woodstzc@dukes.jmu.edu}

\begin{abstract}
Multistationary chemical reaction networks are of interest to scientists and mathematicians alike. While some criteria for multistationarity exist, obtaining explicit reaction rates and steady states that exhibit multistationarity for a given network---in order to check nondegeneracy or determine stability of the steady states, for instance---is nontrivial.  Nonetheless, we accomplish this task for a certain family of sequestration networks. Additionally, our results allow us to prove the existence of nondegenerate steady states for some of these sequestration networks, thereby resolving a subcase of a conjecture of Joshi and Shiu. Our work relies on the determinant optimization method, developed by Craciun and Feinberg, for asserting that certain networks are multistationary. More precisely, we implement the construction of reaction rates and multiple steady states which appears in the proofs that underlie their method. Furthermore, we describe in detail the steps of this construction so that other researchers can more easily obtain, as we did, multistationary rates and steady states.   

\end{abstract}

\begin{keyword}
Mass-action kinetics \sep chemical reaction networks \sep multistationarity \sep determinant optimization method \sep steady states \sep degeneracy 
\end{keyword}

\end{frontmatter}
\section{Introduction}
Although many dynamical systems arising in applications exhibit bistability, there is no complete characterization of such systems.  Even for the subclass of chemical kinetics systems and even under the assumption of mass-action kinetics, which is the focus of this work, the problem is difficult.  

Here we consider the simpler, yet still challenging, question: 
which chemical reaction networks 
are {\em multistationary}, i.e.\ which have the capacity to exhibit two or more steady-state concentrations with the same reaction rates?  Mathematically, this asks: among certain parametrized families of polynomial systems, which admit multiple positive roots?  Therefore, this is a real algebraic geometry problem, and we do not expect an easy answer in general.

The first partial answers to this question are due to Feinberg, Horn, and Jackson in the 1970s.  Their results in chemical reaction network theory~\cite{FeinOsc,HornJackson72} (specifically, deficiency theory~\cite{FeinDefZeroOne}) can preclude or guarantee
multistationarity for certain classes of networks.  For a survey of these and other methods, see~\cite{Shiu}. 

Our work pertains to two related results: (1) a method for ``lifting'' multiple steady states from small networks to larger ones, and (2) the so-called determinant optimization method for certifying that a given network is multistationary.  

The lifting result, stated informally, is as follows: if a chemical reaction network contains an ``embedded'' network that is multistationary, then the entire reaction network also is multistationary under certain hypotheses~\cite{atoms}. 
Therefore we are interested in cataloguing the multistationary networks which contain no embedded multistationary networks, because all larger multistationary networks contain at least one embedded multistationary subnetwork from the catalogue. 

As a step toward such a catalogue, Joshi and Shiu identified a certain infinite family of chemical reaction networks $\widetilde{K}_{m,n}$ to be of particular interest among all networks that include inflow and outflow reactions~\cite{Shiu}. 
This family is minimal, in that it has no embedded subnetworks (with inflow and outflow reactions) that exhibit multistationarity.
To analyze these networks, Joshi and Shiu used the second method for analyzing multistationarity mentioned above. 

Developed by Craciun and Feinberg, the determinant optimization method can assert that a network is multistationary~\cite{CF,CP}; as such, it is a partial converse to their results on ``injective'' reaction networks which guarantee that a network is {\em not} multistationary.  This topic of injectivity has seen much interest in recent years (see~\cite{CF,BP,signs} and the references therein); however, the determinant optimization method has garnered comparatively little attention.  The only related results that we are aware of are due to Banaji and Pantea~\cite{BP}, Feliu~\cite{feliu}, and M\"uller {\em et al.}~\cite{signs}. 

Using the determinant optimization method, Joshi and Shiu proved that $\widetilde{K}_{m,n}$ is multistationary for all integers $m \geq 2$ and odd integers $n \geq 3$ \cite{Shiu}.  Furthermore, they conjectured that  these networks can exhibit multiple {\em nondegenerate} steady states. (It is {\em not} guaranteed that the determinant optimization method produces nondegenerate steady states; we show this for the first time in Remark~\ref{rem:degen}.)  
The significance of the conjecture is that if it is true, then $\widetilde{K}_{2,n}$ would be the first example of an infinite family of chemical reaction networks with inflow and outflow reactions and at-most-bimolecular reactants and products---that is, minimal with respect to the embedding relation among all such networks which have the capacity to exhibit multiple {\em nondegenerate} steady states. Nondegeneracy is important because results that ``lift'' multiple steady states from embedded subnetworks or other typically smaller networks require the steady states to be nondegenerate; a summary of such results appears in~\cite[\S 4]{Shiu}. Also, because trimolecular reactants/products are rather uncommon in chemistry and $\widetilde{K}_{2,n}$ is at most {\em bimolecular}, this family of networks is of particular interest in chemical applications. 

In the current work, we resolve the conjecture for the case $n=3$ and all $m\geq 2$; in other words, we prove that $\widetilde{K}_{m,3}$ has the capacity to admit multiple nondegenerate steady states for all $m\geq2$ (Theorem~\ref{thm:resolve-conj}). To accomplish this, we need information beyond the mere {\em existence} of multiple steady states; we also need precise values (or at least estimates) for the rates and steady states. By applying the proofs underlying the determinant optimization method in Craciun and Feinberg's work to the networks $\widetilde{K}_{m,n}$, we obtain (via standard methods for analyzing recurrence relations) explicit closed forms for multistationary rates and steady states. Then we use these closed forms to verify that the steady states are nondegenerate for small values of $m$ and $n$. 

Finally, recognizing the usefulness of generating closed forms (or at least estimates\footnote{For general networks, the determinant optimization method need not yield closed forms for the rates and steady states, but one can nonetheless obtain estimates.}) for rates and steady states for any reaction network that satisfies the hypotheses of the determinant optimization method, 
in Section \ref{sec:generate} we outline the steps of the method with enough generality to be used in other contexts. These steps are present in Craciun and Feinberg's work but are spread out over several proofs, so our contribution here is to reorganize the method into a concise procedure. 

An outline of our work is as follows.
Section~\ref{sec:bkrd} introduces chemical systems and the main conjecture.  
Section~\ref{sec:generate} describes the determinant optimization method in detail.
We use this method to resolve some cases of the main conjecture in Sections~\ref{sec:n=3} and~\ref{sec:small-val}.  Finally, a discussion appears in Section~\ref{sec:disc}.

\noindent
{\bf Notation.} We denote the positive real numbers by $\mathbb{R}_+:=\{x \in \mathbb{R} \mid x>0\}$, and the standard inner product in $\mathbb{R}^n$ by $\langle -,-
\rangle$. The $i^{th}$ entry of a vector $x$ is denoted $x_i$.

\section{Background} \label{sec:bkrd}
This section introduces chemical reaction networks, their corresponding mass-action kinetics systems,  
and the main object of our paper: the sequestration network $\widetilde{\textit{K}}_{m,n}$. 

\subsection{Mass-action kinetics systems}

\begin{definition}
A {\em chemical reaction network} $G=\left \{ \mathcal{S,C,R} \right \}$ consists of three finite sets:
\begin{enumerate}
  \item a set of {\em species} $\mathcal{S} = \{ X_1, X_2, \dots, X_s\}$, 
  \item a set $\mathcal{C}$ of {\em complexes}, which are non-negative integer linear combinations of the species, and 
  \item a set $\mathcal{R}\subseteq \mathcal{C} \times \mathcal{C}$ of {\em reactions}. 
\end{enumerate}
\end{definition}

\begin{example}
The following chemical reaction network:
\begin{gather*}
X_1+X_2 \rightarrow X_3 \\
X_2 \rightarrow X_1+X_4~,
\end{gather*}
is entirely defined by:
\begin{enumerate}
    \item the set of species $\mathcal{S}=\{X_1, ~X_2,~ X_3, ~X_4\}$, 
    \item the set of complexes $\mathcal{C}= \{X_1+X_2,~X_3,~X_2,~X_1+X_4 \}$, and
    \item the set of reactions $ \mathcal{R} = \{ (X_1+X_2,X_3 ), (X_2,X_1+X_4 ) \}$.
\end{enumerate}
\end{example}

Any reaction network $G=\left \{ \mathcal{S,C,R} \right \}$ is contained in the {\em{fully open extension network}} $\widetilde{G}$ obtained by including all {\em inflow} and {\em outflow} reactions:
\begin{equation} \label{eq:open}
\widetilde{G}:=\left \{ \mathcal{S,~C\cup S\cup \left \{ \textup{0} \right \},~R\cup }\left \{ X_{i}\leftrightarrow 0 \right \}_{X_{i}\in \mathcal{S}} \right \}~.
\end{equation}
In other words, the fully open extension of any network is obtained by adding the reactions $ X\rightarrow0$ (inflow) and $0 \rightarrow X$ (outflow) for all $X \in \mathcal{S}$.

As all the reactions take place, the concentrations of each of the species will change. We make use of {\em mass-action kinetics} to define a system of ordinary differential equations that describes, for each species, how its concentration changes as a function of time. This ODE system is described by the {\em stoichiometric matrix} $\Gamma$ and the {\em reactant vector} $R(x)$, which is a vector-valued function of the vector of species concentrations {\bf x}.  

\begin{definition}
\label{gamma}
Let $G=\Net$ be a network, and let $\{ y_1 \to y_1', y_2 \to y_2', \dots , y_{|\Reac|} \to y_{|\Reac|}' \}$ be an ordering of the reactions.
\begin{enumerate}
\item
The {\em reaction vector} of the reaction $y_i \to y_i'$ is the vector $y_i' - y_i$, viewed in $\mathbb{R}^{|\Spec|}$. Note that $y_i \to y_i'$ is a slight abuse of notation, used to denote the reaction $y_i \cdot X \to y_i' \cdot X$ where $X$ is the vector of all species. Explicitly, the vectors $y_i$ and $y_i'$ only contain species coefficients.
\item The {\em stoichiometric matrix} of $G$ is the $\left | \mathcal{S} \right | \times \left | \mathcal{R}\right |$ matrix $\Gamma$ whose $k^{\rm th}$ column is the reaction vector of $y_k \to y_k'$.

\item
The {\em reactant vector}  $R(x)$
is the vector of length $|\Reac|$ whose $k^{th}$ entry 
is the (monomial) product:
\begin{gather*}
r_{k} x_1^{(y_k)_1} x_2^{(y_k)_2} \cdots x_{\left | \mathcal S \right |}^{(y_k)_{\left | \mathcal S \right |}}~,
\end{gather*}
where $r_k \in \mathbb{R}_+$ is the {\em reaction rate} of the $k^{th}$ reaction.
\end{enumerate}
\end{definition}

\begin{definition} \label{eq:mass-action}
The {\em mass-action kinetics system} of a network $G=\Net$ and a vector of reaction rates $(r_k)\in \mathbb{R}_+^{|\Reac|}$ is defined by the following system of ordinary differential equations:
\begin{equation}
\label{M-A System}
\frac{d\mathbf{x}}{dt} = \Gamma \cdot R(\mathbf{x})~.
\end{equation}
\end{definition}

\begin{example}
For the following network:
\begin{gather*}
A+2B \overset{r}{\rightarrow} 2A~,
\end{gather*}

$\Gamma = \begin{bmatrix}
1\\ 
-2
\end{bmatrix}$
and $R(x)=\left( r x_A x_B^{2} \right)$, so 
the mass-action kinetics system~(\ref{M-A System}) is: 
\begin{gather*}
\begin{bmatrix}
\frac{dx_A}{dt} \\
\frac{dx_B}{dt}
\end{bmatrix}
~=~ \Gamma \cdot R(x) ~=~\begin{bmatrix}
r x_A x_B^{2}\\ 
-2r x_A x_B^{2}
\end{bmatrix}~.
\end{gather*}
\end{example}
An important characteristic of mass-action kinetics systems is that they may or may not have the capacity to admit (positive) steady states: 

\begin{definition}
A {\em positive steady state} is a vector $\mathbf{x}^* \in \mathbb{R}^{|\mathcal{S}|}_{>0}$ such that $\Gamma \cdot R(\mathbf{x}^*)=0$.
A steady state $\mathbf{x^{*}}$ 
is {\em nondegenerate} if  ${\rm Im}(df(\mathbf{x^{*}})|_{{\rm Im}(\Gamma)}) = {\rm Im}(\Gamma)$, where $df(\mathbf{x^{*}})$ denotes the Jacobian matrix of the mass-action kinetics system at $\mathbf{x^{*}}$.
\end{definition}

\begin{definition}
A network that includes all inflow and outflow reactions is {\em multistationary}\footnote{The focus of this work is on certain networks $\widetilde{K}_{mn,}$ that include all flow reactions.  For networks that do not include all flow reactions, the definition of multistationary must incorporate the conservation relations in the network, if any.} if there exist two distinct concentration vectors $\mathbf{x^*, x^\#}$ and positive reaction rates such that $\Gamma \cdot R(\mathbf{x^*}) = \Gamma \cdot R(\mathbf{x^\#}) = 0$.
\end{definition}

\subsection{The sequestration network $K_{m,n}$}
The main object of our paper is the {\em fully open extension} of the network $\textit{K}_{m,n}$:

\begin{definition} \label{def:seq}
For positive integers $n\geq 2$ and $m\geq 2$, the {\em sequestration network} $\mathit{K}_{m,n}$ is: 
\begin{align} \label{eq:seq}
X_{1}+X_{2}&\rightarrow 0\\
X_{2}+X_{3}&\rightarrow 0 \nonumber \\
& ~~ \vdots \nonumber \\
X_{n-1}+X_{n}&\rightarrow 0 \nonumber \\
X_{1}&\rightarrow mX_{n}~. \nonumber
\end{align}
$\widetilde{K}_{m,n}$ is the fully open extension of $\mathit{K}_{m,n}$, obtained by adjoining all inflow and outflow reactions, as in~\eqref{eq:open}. 
\end{definition}

Schlosser and Feinberg analyzed variations of $\widetilde{K}_{2,n}$~\cite[Table 1]{SF}, as did Craciun and Feinberg~\cite[Table 1.1]{CF}.  Joshi and Shiu introduced the version of the sequestration networks in Definition~\ref{def:seq}, and proved that some of them are multistationary:

\begin{proposition}{\cite[Lemma~6.9]{Shiu}}
\label{prop:mss}
For positive integers $m \geq 2$ and $n \geq 3$, if $n$ is odd, then $\widetilde{K}_{m,n}$ admits multiple positive steady states. 
\end{proposition}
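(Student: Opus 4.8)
The plan is to apply the Craciun--Feinberg determinant criterion \cite{CF}. Since $\widetilde{K}_{m,n}$ is fully open, its vector field $f(\mathbf{x}) = \Gamma\, R(\mathbf{x})$ is a polynomial map on $\mathbb{R}^{n}_{>0}$, and by the theory underlying the determinant optimization method, multistationarity is certified as soon as one exhibits positive rate constants and a positive concentration vector $\mathbf{x}$ at which $(-1)^n \det df(\mathbf{x}) < 0$; the sign $(-1)^n$ is the one forced by the degradation reactions $X_i \to 0$, so a point of the opposite sign witnesses non-injectivity and yields two distinct positive steady states. Thus the whole problem reduces to analyzing the sign of the polynomial $\det df(\mathbf{x})$ as a function of the positive parameters.

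First I would write out the Jacobian $J := df(\mathbf{x})$ in the ordering $X_1,\dots,X_n$. Let $\kappa_i$ denote the rate constant of $X_i+X_{i+1}\to 0$, let $\kappa_n$ be that of $X_1 \to m X_n$, and let $a_i$ be that of $X_i\to 0$ (the reactions $0 \to X_i$ are constant and do not enter $J$). The only couplings between distinct species come from the sequestration reactions and from $X_1 \to m X_n$, so $J$ is tridiagonal except for a single corner entry $J_{n,1}=m\kappa_n$. Using multilinearity of the determinant in the $(n,1)$ entry, I would split $\det J = \det \widetilde{J} + m\kappa_n\, C_{n,1}$, where $\widetilde{J}$ is the purely tridiagonal part and $C_{n,1}$ is the cofactor. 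A direct expansion shows the minor deleting row $n$ and column $1$ is lower triangular with the super-diagonal entries $J_{i,i+1}=-\kappa_i x_i$ on its diagonal, whence $C_{n,1}=\prod_{i=1}^{n-1}\kappa_i x_i>0$; so the corner contributes the single positive monomial $m\prod_{i=1}^{n}\kappa_i\prod_{i=1}^{n-1}x_i$, regardless of the parity of $n$.

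The two remaining ingredients are the sign of the tridiagonal part and the parity dependence. I would check that $\det(-\widetilde{J})>0$ for all positive parameters: the matrix $-\widetilde{J}$ has positive diagonal and nonnegative off-diagonal entries and is column diagonally dominant, the strict slack in the first column being supplied by $\kappa_n$ (and in the remaining columns by the $a_i$); being irreducibly diagonally dominant, it has all eigenvalues in the right half-plane by Gershgorin, hence positive determinant. Consequently $(-1)^n \det J = \det(-\widetilde{J}) + (-1)^n\, m\prod_{i=1}^{n}\kappa_i\prod_{i=1}^{n-1}x_i$, and the sign of the corner term relative to the always-positive tridiagonal determinant is governed exactly by the parity of $n$: for $n$ even the two terms reinforce (injectivity, no multistationarity), whereas for $n$ odd they compete. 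This is precisely where the hypothesis that $n$ is odd enters.

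Finally, for odd $n$ I must certify that the corner can win the competition, i.e.\ that $\det(-\widetilde{J})$ can be made smaller than the corner monomial. The device is to drive the rate constants $a_i$ of $X_i\to 0$ to $0$: a short induction on the three-term recurrence $E_k = (-d_k)E_{k-1} - (\kappa_{k-1}^2 x_{k-1}x_k)E_{k-2}$ for the leading tridiagonal minors shows that in this limit $\det(-\widetilde{J})$ collapses to exactly $\prod_{i=1}^{n}\kappa_i\prod_{i=1}^{n-1}x_i$, i.e.\ to $\tfrac{1}{m}$ of the corner term. Hence $(-1)^n\det J \to (1-m)\prod_{i=1}^{n}\kappa_i\prod_{i=1}^{n-1}x_i<0$ for $m\ge 2$, and by continuity $(-1)^n\det J<0$ persists for small strictly positive $a_i$, which is all that the criterion requires. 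I expect this last step to be the main obstacle: both $\det(-\widetilde{J})$ and the corner depend on the same parameters, and naive uniform scalings of the $x_i$ inflate the tridiagonal determinant at least as fast as the corner, so one needs the observation that letting the $a_i\to 0$ degenerates the tridiagonal determinant down to the sharp value $\tfrac{1}{m}$ of the corner. Establishing that exact limiting identity is the crux of the argument.
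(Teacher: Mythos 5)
Your Jacobian analysis is essentially correct and in fact reproduces, in concrete form, exactly one of the two hypotheses that the determinant optimization method requires: the corner-cofactor computation $C_{n,1}=\prod_{i=1}^{n-1}\kappa_i x_i>0$, the positivity of $\det(-\widetilde J)$ by strict column diagonal dominance, and the limiting identity $\det(-\widetilde J)\to \tfrac1m(\text{corner term})$ as the outflow rates $a_i\to 0$ all check out (for $n=3$ one computes directly $\det(-\widetilde J)\big|_{a=0}=\kappa_1\kappa_2\kappa_3x_1x_2$), and together they give $(-1)^n\det df(\mathbf{x})<0$ for small positive outflows when $n$ is odd and $m\ge 2$. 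This is a hands-on re-derivation of hypothesis (I) of the `Input' of Section~\ref{sec:generate}, i.e.\ of \cite[Lemma~6.7]{Shiu}. Note that the paper itself does not reprove the proposition: it cites \cite[Lemma~6.9]{Shiu}, whose proof consists of verifying hypotheses (I) \emph{and} (II).

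The genuine gap is that you have dropped hypothesis (II), and your opening claim --- that multistationarity is certified as soon as $(-1)^n\det df(\mathbf{x})<0$ at one positive point --- is not the Craciun--Feinberg theorem. A sign change of the determinant yields (after the Intermediate Value Theorem and a perturbation argument) two distinct positive points $a\neq b$ with $\Gamma_{T,O}\,R_{T,O}(a)=\Gamma_{T,O}\,R_{T,O}(b)$, i.e.\ a failure of injectivity of the non-constant part of the vector field; to turn these into \emph{steady states} one must choose the inflow rate vector equal to $-\Gamma_{T,O}\,R_{T,O}(a)=\sum_{y\to y'}r_{y\to y'}a^{y}(y-y')$, and this vector must be componentwise \emph{positive}. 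That is precisely hypothesis (II), and for $\widetilde K_{m,n}$ it is not automatic: the $n$-th coordinate of the required inflow is $r_{n-1}x_{n-1}x_n+a_nx_n-mr_nx_1$, which is \emph{negative} for the most natural parameters in your construction (all $\kappa_i=x_i=1$ and $a_i\to 0$ give $1+a_n-m<0$ for $m\ge 2$). One must tune the rates so that the $(n-1)$-st sequestration reaction dominates the $n$-th reaction by a factor greater than $m$ --- this is exactly the role of $\tilde\eta=(1,\dots,1,m+1,1)$ in the paper --- and then re-verify that your determinant sign conclusion survives at the tuned parameters. As written, your argument establishes non-injectivity but not the existence of two positive steady states.
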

For $m=1$ or $n$ even, the network $\widetilde{K}_{m,n}$ is ``injective'' and therefore not multistationary~\cite[\S 6]{Shiu}.  
Joshi and Shiu conjectured that Proposition~\ref{prop:mss} extends as follows:
\begin{conjecture}
\label{Conjecture}
For positive integers $m \geq 2$ and $n \geq 3$, if $n$ is odd, then $\widetilde{K}_{m,n}$ admits multiple {\em nondegenerate} steady states.
\end{conjecture}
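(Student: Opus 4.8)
The plan is to prove the conjecture (at least for the cases within reach, namely $n=3$ and all $m\geq 2$) by producing, for each admissible pair $(m,n)$, an explicit vector of reaction rates together with two distinct positive steady states at which the Jacobian is invertible. The first observation, which simplifies the notion of nondegeneracy enormously, is that $\widetilde{K}_{m,n}$ is fully open: the flow reactions $X_i \leftrightarrow 0$ contribute the reaction vectors $+e_i$ and $-e_i$, so ${\rm Im}(\Gamma)$ is all of $\mathbb{R}^{|\mathcal{S}|}$. Hence the nondegeneracy condition ${\rm Im}(df(\mathbf{x}^*)|_{{\rm Im}(\Gamma)}) = {\rm Im}(\Gamma)$ is equivalent to $df(\mathbf{x}^*)$ being surjective, i.e.\ to $\det df(\mathbf{x}^*)\neq 0$. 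Thus it suffices to exhibit multiple steady states with common rates at each of which the Jacobian determinant does not vanish.

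First I would apply the determinant optimization method of Craciun and Feinberg directly to $\widetilde{K}_{m,n}$, following the \emph{constructions} underlying the proofs of the method rather than only its existence conclusion. Since the known multistationarity (Proposition~\ref{prop:mss}) is itself obtained through this method, its construction already supplies candidate rates and steady-state vectors; the task is to make them explicit. The sparse, banded structure of the sequestration reactions $X_i+X_{i+1}\to 0$ together with $X_1\to mX_n$ causes the defining equations for the rates and for the coordinates of the steady states to assemble into linear recurrence relations. I would solve these recurrences by standard techniques to obtain closed forms, parametrized by the free quantities arising in the construction, which for $n=3$ reduce to completely explicit formulas.

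With closed forms in hand, the next step is to write down the Jacobian $df(\mathbf{x}^*)$ of the mass-action system~\eqref{M-A System} at each constructed steady state and evaluate its determinant. For a fully open network this Jacobian is the sum of a constant diagonal matrix coming from the flow reactions and the Jacobian of the core reactions of $K_{m,n}$, so $\det df(\mathbf{x}^*)$ is an explicit polynomial in the steady-state coordinates and the rates. Substituting the closed forms, I would show this determinant is nonzero at both steady states; for $n=3$ and all $m\geq 2$ this is a finite, if delicate, computation, which yields Theorem~\ref{thm:resolve-conj}.

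The main obstacle is precisely that the determinant optimization method does not, by itself, deliver nondegeneracy: it constructs the two steady states near a configuration where the relevant Jacobian determinant has been arranged to vanish, so a priori one or both of the resulting steady states could themselves be degenerate---and Remark~\ref{rem:degen} shows this genuinely can occur. Overcoming this requires quantitative, rather than merely qualitative, control of the construction, namely precise values or sharp estimates for the rates and steady states, so that $\det df(\mathbf{x}^*)$ can be certified to be nonzero at the two endpoints rather than only at the critical intermediate point. This is exactly why closed forms (or good estimates) are indispensable, and also why the argument is expected to close cleanly for $n=3$, where the recurrences terminate in tractable expressions, while general odd $n$ leaves the sign of the determinant harder to pin down.
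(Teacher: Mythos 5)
Your proposal follows essentially the same route as the paper: reduce nondegeneracy to $\det df(\mathbf{x}^*)\neq 0$ using the fact that $\Gamma$ is full rank for the fully open network, extract closed-form rates and steady states from the constructive version of the determinant optimization method by solving the resulting linear recurrences, and then certify nonvanishing of the two Jacobian determinants by explicit quadratic bounds in $m$ for $n=3$ plus finitely many direct checks for small $m$. The paper likewise resolves the conjecture only partially (Theorem~\ref{thm:resolve-conj} for $n=3$ and all $m\geq 2$, and Theorem~\ref{thm:small-m-n} for small $m$ and $n$), so your acknowledgment that general odd $n$ remains out of reach matches its scope exactly.
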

To resolve Conjecture~\ref{Conjecture}, we must show that $Im(df(\mathbf{x^*})) = Im(\Gamma)$ for two distinct positive steady states $\mathbf{x^*}$. We will see in~\eqref{eq:stoic-mat} below that $\Gamma$ is full rank, so we need only show that $\det(df(\mathbf{x}^*)) \neq 0$ for two positive steady states $\mathbf{x^*}$. 

\begin{remark}
As mentioned in the introduction, networks in which all reactants and products are at most {\em bimolecular}---that is, each complex has the form $0$, $X$, $X+Y$, or $2X$---are the norm in chemistry. 
This is the case for the networks $\widetilde{K}_{2,n}$, so that the $n^{th}$ internal reaction is $X_1 \rightarrow 2 X_n$.
\end{remark}

We end this section by displaying the matrices that define the mass-action kinetics system (\ref{M-A System}) defined by $\widetilde{K}_{m,n}$. We order the reactions as follows: first, we enumerate the $n$ {\em internal} (or {\em true}) reactions listed in~\eqref{eq:seq} (so, the first reaction is $X_1 + X_2 \to 0$, and so on), 
next are the $n$ outflow reactions (so, the ($n+1$)st reaction is $X_1 \to 0$, and so on), and then we have the $n$ inflow reactions (so, the ($2n+1$)st reaction is $0 \to X_1$, and so on).  We will refer to the sets of internal (true), outflow, and inflow reactions as $\mathcal{R}_T$, $\mathcal{R}_O$, and $\mathcal{R}_I$, respectively. 

The stoichiometric matrix for $\widetilde{K}_{m,n}$ is:
\begin{equation} \label{eq:stoic-mat}
\Gamma
=
\left[
\begin{array}{cccccc|c|c}
    -1 & 0  &  0 & \dots  & 0 & -1 & &\\
    -1 & -1 &  0 & \dots  & 0 & 0 & &\\
     0 & -1 & -1 &  \ddots & \vdots & \vdots & -I^n & I^n \\
     0 & 0  &  -1 & \ddots & \vdots & \vdots & &\\
    \vdots & \vdots & \ddots & \ddots & -1 & 0 & &\\
    0 & 0 & 0 & \dots  & -1 & m & &
\end{array}
\right]~,
\end{equation}
where $I^n$ is the $n \times n$ identity matrix.  
The reactant vector is: 

\begin{equation*}
R(\mathbf{x}) 
=
\begin{bmatrix}
r_1x_1x_2 \\
r_2x_2x_3 \\
\vdots \\
r_{n-1}x_{n-1}x_n \\
\hline 
r_nx_1 \\
\hline
r_{n+1}x_1 \\
r_{n+2}x_2 \\
\vdots \\
r_{2n}x_n \\
\hline
r_{2n+1} \\
\vdots \\
r_{3n}
\end{bmatrix}~,
\end{equation*}
where the $r_i \in \mathbb{R}_{+}$ are the reaction rates and each $x_i \in \mathbb{R}_{+}$ is the concentration of each species $X_i$. 
The mass-action ODEs~\eqref{eq:mass-action} are:
\begin{align*}
\dot{x}_1~=& -r_1x_1x_2 - r_nx_1 - r_{n+1}x_1 + r_{2n+1} \\
\dot{x}_i~=& -r_{i-1}x_{i-1}x_i - r_ix_ix_{i+1} - r_{n+i}x_i + r_{2n+i} \quad \quad {\rm for~} 2 \leq i \leq n-1 \\ 
\dot{x}_n~=& -r_{n-1}x_{n-1}x_n + mr_nx_1 - r_{2n}x_n + r_{3n}~.
\end{align*}
Thus the Jacobian matrix, $df(\mathbf{x})$, is the following $(n \times n)$-matrix
\small{
\begin{equation} \label{eq:jac}
\scalemath{0.638}{
\begin{bmatrix}
-r_1x_2 - r_n - r_{n+1} & -r_1x_1  & 0 &\dots & 0 &0\\
-r_1x_2 & -r_1x_1 - r_2x_3 - r_{n+2} & -r_2x_2  & \dots & \vdots &\vdots\\
0 & -r_2x_3 & -r_2x_2 - r_3x_4 - r_{n+3} & \ddots & 0 &0\\
\vdots &0 &-r_3x_4 &\ddots &-r_{n-2}x_{n-2} &0\\
0 & \vdots & \vdots & \ddots    & -r_{n-2}x_{n-2}-r_{n-1}x_{n}-r_{2n-1} &-r_{n-1}x_{n-1}\\
mr_n & 0 & 0 & \dots & -r_{n-1}x_n & -r_{n-1}x_{n-1} - r_{2n}
\end{bmatrix}
}~.
\end{equation}

\section{Constructing multiple steady states via the determinant optimization method}
\label{sec:generate}
The {\em determinant optimization method}\footnote{Related techniques for establishing multistationarity appear in work of Banaji and Pantea~\cite[\S 4]{BP}, Feliu~\cite[\S 2]{feliu}, and M\"uller {\em et al.}~\cite[\S 3.2]{signs}.} was developed by Craciun and Feinberg to show that certain chemical reaction networks are multistationary~\cite{CF}. More precisely, the method guarantees that some networks (such as those that satisfy the `Input' conditions below) are necessarily multistationary.  For instance, Joshi and Shiu showed that the networks $\widetilde{K}_{m,n}$ (for $m \geq 2$ and odd $n \geq 3$) satisfy the `Input' conditions, and thus concluded these networks are multistationary (Proposition~\ref{prop:mss}). 

In fact, the determinant optimization method also applies to some networks that do not satisfy the `Input' conditions.  To determine if this is the case for a given network, one must check whether a certain optimization problem has a solution (see Remark~\ref{rmk:skip-ahead-steps}).  If so, then the method guarantees that the network is multistationary.

However, in many applications, it is useful not only to know that a network is multistationary but also to have explicit steady-state concentrations and reaction rates that are witnesses to multistationarity.  For instance, here we would like to determine whether the steady states are degenerate, whereas in other settings one might like to perform stability analysis.

Fortunately, the proofs in~\cite[\S 4]{CF} that underlie the determinant optimization method are constructive, up to one use of the Intermediate Value Theorem, so one can generate or at least approximate steady states and rates. 
This section describes the step-by-step procedure to do this; following our steps is easier than (although equivalent to) ``backtracking'' through the proofs.  That is, our contribution here is to re-package the determinant optimization method into a constructive algorithm.  We will see that for some networks, such as $\widetilde{K}_{m,n}$, the method constructs closed forms for the steady states and rates.

\vskip .1in
\noindent
{\sc Determinant optimization method (constructive version)} \\
{\bf Input:} Any chemical reaction network $G$ with $n = |\Spec|$ species that contains all $n$ inflow reactions such that 
there exist $n$ reactions $y_1 \to y_1'$, $y_2 \to y_2'$, \ldots ,
$y_n \to y_n'$ among the internal (true) and outflow reactions $\Reac_T \cup \Reac_O$ of $G$ for which
    \begin{enumerate}[(I)]
    \item $\det(y_1, y_2, ..., y_n) \cdot \det((y_1 - y_1'), (y_2 - y_2'), ..., (y_n - y_n')) < 0$, and 
    \item there exists a vector $\tilde{\eta} \in \mathbb{R}^{n}_+$ such that $\Sigma_{i = 1}^{n} \tilde{\eta}_i (y_i - y_i') \in \mathbb{R}_+^{n} = \mathbb{R}_+^{|S|}$. 
    \end{enumerate}

\noindent
{\bf Output:} A certificate of multistationarity of $G$: (approximations of) a positive reaction rate vector $(r_{y \to y'}) \in \mathbb{R}^{|\Reac|}_{+}$ and two positive concentration vectors $x^*$ and $x^{\#}$ which are both steady states of the mass-action system defined by $G$ and  $(r_{y \to y'})$. \\
{\bf Steps:} Described below.

\vskip 0.1in

With an eye toward resolving Conjecture~\ref{Conjecture}, $\widetilde{K}_{m,n}$ 
will be our ongoing example. 

\begin{example}
For $\widetilde{K}_{m,n}$ (with $m \geq 2$ and $n \geq 3$ odd), hypothesis~(II) is satisfied by the vector $\tilde{\eta}= (1,1,\dots, 1, m+1, 1)$~\cite[Lemma 6.9]{Shiu}.  Hypothesis (I) was proven in \cite[Lemma 6.7]{Shiu}, where the $n$ reactions are precisely the $n$ internal (true) reactions~\eqref{eq:seq}.  Conveniently, these are the reactions labeled $y_i \to y_i'$ of the sequestration network, for $1 \leq i \leq n$, so our notation for the first $n$ reactions---as well as the use of $n$ for the number of species---matches that of the determinant optimization method.
 
\end{example}

The steps below involve a certain linear transformation $T_\eta$; specifically, for $\eta \in \mathbb{R}^{\mathcal{R}_T \cup \mathcal{R}_O}$, the linear transformation $T_\eta: \mathbb{R}^{|\mathcal{S}|} \to \mathbb{R}^{|\mathcal{S}|}$ is defined by: 
\begin{equation}
\label{transformation}
T_\eta (\delta) ~=~ \sum_{y \rightarrow y' \in \mathcal{R}_T \cup \mathcal{R}_O} \eta_{y \rightarrow y'} (y \cdot \delta) (y - y')~.
\end{equation}
Equivalently, the matrix representation of $T_{\eta}$ is $d(-f)(1,1,\dots,1)$ where the rates are given by $r_i = \eta_i$.  In other words, this matrix is the Jacobian matrix of the mass-action system~\eqref{eq:mass-action} defined by the internal and outflow reaction rates $\eta$ (and any choice of inflows: they do not appear in the Jacobian matrix) at the concentration vector $(1,1,\dots, 1)$.

\begin{example}
For $\widetilde{K}_{m,n}$, the matrix representation of $T_\eta$ is:
\begin{equation} \label{eq:T}
\scalemath{0.89}{
\begin{bmatrix}
\eta_1 + \eta_n + \eta_{n+1} & \eta_1 & 0 & \cdots & 0\\
\eta_1 & \eta_1 + \eta_2 + \eta_{n+2} & \eta_2 & \cdots & 0\\ 
0 & \eta_2 & \eta_2 + \eta_3 + \eta_{n+3} & \eta_3 &  0  \\ 
\vdots & 0 & \ddots & \ddots & \vdots\\
0 & \vdots & \cdots & \eta_{n-2} + \eta_{n-1} + \eta_{2n-1} & \eta_{n-1} \\
-m \eta_n & 0 & \cdots & \eta_{n-1}& \eta_{n-1} + \eta_{2n}~.
\end{bmatrix}}
\end{equation}
From the Jacobian matrix~\eqref{eq:jac}, it is clear that this matrix~\eqref{eq:T} equals $d(-f)(1,1,\dots,1)$, where the reaction rates are given by $r_i = \eta_i$.
\end{example}

\noindent
{\bf The first step} is to construct a (strictly positive) vector $\eta^- \in \mathbb{R}_+^{|\Reac_T \cup \Reac_O|}$, indexed by all internal (true) and outflow reactions, such that
    \begin{enumerate}[(I)]
    \item $\det(T_{\eta^-}) <0$, and 
    \item $\sum\limits_{y \to y' \in \Reac_T \cup \Reac_O} \eta^-_{y \to y'} (y - y') \in \mathbb{R}_+^{|\Spec|}.$ 
    \end{enumerate}
Craciun and Feinberg proved that these conditions (I) and (II) are satisfied by a vector $\eta^-$ of the following form:
\begin{equation*}
\eta ^{-}_{y \to y'}=
\begin{cases}
\lambda \tilde{\eta}_{y \to y'} & \text{ if~ } y \to y' \in \{y_i \to y_i' \mid i \in[n] \} \\ 
\epsilon & \text{ else },
\end{cases}
\end{equation*}
where $\lambda$ is sufficiently large and $\epsilon$ is sufficiently small~\cite[proof of Theorem 4.2]{CF}.

\begin{example}
For $\widetilde{K}_{m,n}$ (with $m \geq 2$ and $n \geq 3$ odd), we define $\eta^-$ as follows:
\begin{equation*}
\eta ^{-}_{i}=
\begin{cases}
\lambda & \text{ if }1 \leq i \leq n-2 \text{ or } i=n\\ 
(m+1)\lambda & \text{ if } i= n-1 \\ 
\epsilon & \text{ if } n+1 \leq i \leq 2n~.
\end{cases}
\end{equation*}
\end{example}

\begin{remark}[Stronger versions of the determinant optimization method] \label{rmk:skip-ahead-steps}
This section describes the
simplest version of the determinant optimization method.  In fact, even if a network does not satisfy the hypotheses in the input given above, the method can still apply: \cite[Remark 4.1]{CF} describes, in this setting, how to implement the above first step, i.e.\ how to test whether a suitable $\eta^-$ exists, as an optimization problem.  Specifically, this is a polynomial optimization problem with linear constraints over a compact set.  Therefore, one can use any applicable optimization method.
Additionally, see Remark~\ref{rmk:interpret} for how one can begin the algorithm at the second step.
\end{remark}

\noindent
{\bf The second step} is to construct
a (strictly positive) vector $\eta^0 \in \mathbb{R}_+^{|\Reac_T \cup \Reac_O|}$ for which:
    \begin{enumerate}[(I$'$)]
    \item $\det(T_{\eta^0}) =0$, and 
    \item $\sum\limits_{y \to y' \in \Reac_T \cup \Reac_O} \eta^0_{y \to y'} (y - y') \in \mathbb{R}_+^{|\Spec|}.$     \end{enumerate}
Craciun and Feinberg proved that this can be accomplished as follows~\cite[proof of Theorem 4.1]{CF}. 
First, construct an $\eta^+ \in \mathbb{R}^{|\mathcal{R}_T \cup \mathcal{R_O|}}_+$ such that $\det(T_{\eta^+}) > 0$;  
do this by assigning a large value to outflow reactions and a small value to internal reactions:
\begin{equation*}
\eta ^{+}_{y \to y'}=
\begin{cases}
\lambda^+ & \text{ if~ } y \to y' \in \mathcal{R}_O \\ \epsilon^+ & \text{ if~ } y \to y' \in \mathcal{R}_T~,
\end{cases}
\end{equation*}
where $\lambda^+>0$ is large and $\epsilon^+ >0$ is small.

Then, by interpolating between this vector $\eta^+$ and the vector $\eta^-$ from the previous step, the Intermediate Value Theorem (plus the fact that the set of vectors satisfying condition (II) is convex) guarantees the existence of an $\eta^0$ with the required properties. Moreover, such a suitable $\eta^0$ can be numerically approximated, and, with careful tracking of error, one can use this approximation to generate steady states and concentrations in the following steps.  

\begin{remark}[Interpretation of the second step and subsequent steps] \label{rmk:interpret}
What the second step does is to find reaction rates (given by $\eta_0$ for the internal and outflow rates, and the vector in $(II')$ for the inflow rates) at which the concentration vector $(1,1,\dots,1)$ is a degenerate steady state: degeneracy is by $(I')$, and being a steady state comes from $(II')$.  

Equivalently, if $(\widetilde{r}_{y \to y'})$ is any positive vector of reaction rates at which some concentration vector $\widetilde{c}$ is a degenerate positive steady state, then the vector $\eta \in \mathbb{R}_+^{|\Reac_T \cup \Reac_O|}$ defined coordinate-wise by $\eta_{y \to y'}= \widetilde{r}_{y \to y'} \widetilde{c}^y$ satisfies the second step.  So, a reader who has already found a degenerate positive steady state of their system could start the determinant optimization method at the second step.  In other words, one could begin applying the method by immediately searching for a suitable $\eta^0$ (without first generating $\eta^-$ and $\eta^+$).  One strategy for doing this is described in Remark~\ref{rmk:strategy}, which we employ for $\widetilde{K}_{m,n}$ beginning in Example~\ref{ex:eta-0}.

In the next steps, the determinant optimization method constructs a certain vector $\delta$ so that $|\delta|$ is a suitable bifurcation parameter: for $|\delta|$ small but positive, the degenerate steady state breaks into two nondegenerate steady states.
\end{remark}

\begin{remark} \label{rmk:strategy}
Here is one strategy for constructing a suitable $\eta^0$ (without using $\eta^-$ and $\eta^+$).  First, identify (if possible) an $\eta \in \mathbb{R}^{|S|}_+$ and a reaction $y_i \to y_i'$ among the internal (true) and outflow reactions such that:
\begin{enumerate}[(a)]
\item $\sum\limits_{y \to y' \in \left( \Reac_T \cup \Reac_O \right) \setminus \{y_i \to y_i'\} } \eta^0_{y \to y'} (y - y') \in \mathbb{R}_+^{|\Spec|}$, and
\item $y_i - y_i' \in \mathbb{R}_{\geq 0}^{|S|}$ (this holds, for instance, if $y_i \to y_i'$ is an outflow reaction).
\end{enumerate}
One could see whether $\eta^+$ or $\eta^-$ might work (we use $\eta^-$ in Example~\ref{ex:eta-0} below).  Then, define $\eta$ as follows: let the entry $\eta^0_i$ (corresponding to the same $i^{th}$ reaction) be free, and fix $\eta^0_j = \eta^-_j$ for $j \neq i$.  Then, solve the (univariate polynomial) equation $\det(T_{\eta^0})=0$. 
If there is a positive solution (for $\eta^0_i$), then the resulting vector $\eta^0$ is positive, and $(I')$ holds by construction. Furthermore, $(II')$ holds because the sum in $(II')$ is precisely the sum of a positive vector (namely, the sum in (a)) and a non-negative vector (namely, $\eta^0_i (y_i - y_i')$).
However, $\eta^0_i$ is not guaranteed to be positive, so this strategy may fail.
\end{remark}

\begin{example} \label{ex:eta-0}
For $\widetilde{K}_{m,n}$ (with $m \geq 2$ and $n=3,5,7,9,11$), 
the following choice of $\eta^0$ satisfies the requirements of the second step:
\begin{equation}
\eta ^{0}_{i}=
\begin{cases}
    \lambda & \text{ if }1 \leq i \leq n-2 \text{ or } i=n  \\ 
    (m+1)\lambda & \text{ if } i= n-1  \\ 
    \epsilon & \text{ if } n+1 \leq i \leq 2n-1  \\
    \frac{(m+1)(m\lambda^n + \lambda^2 (m+1) \daleth_{n-2})}{(\lambda(m+2) + \epsilon)\daleth_{n-2} - \lambda^2\daleth_{n-3}} - \lambda(m+1)     & \text{ if } i=2n    ~, \label{eq:eta-2n}
\end{cases}
\end{equation}
where $\lambda>0$ and $\epsilon>0$ are such that $\eta^0_{2n}$ is positive\footnote{We checked that such $\lambda, \epsilon$ exist for $n=3,5,7,9,11$ (and $m \geq 2$), and we furthermore conjecture that for larger $n$, choosing $\lambda$ sufficiently large and $\epsilon$ sufficiently small will suffice.} (thus, all coordinates of $\eta^0$ are positive), and $\daleth_i$ is $i^{th}$ principal minor of the matrix representation of $T_{\eta^0}$ displayed below in~\eqref{T0}, i.e. $\daleth_i$ is the determinant of the $i \times i$ (tridiagonal) upper-left submatrix of~\eqref{T0} (also, $\daleth_0:=1$).

To show that this choice of $\eta^0$ satisfies the two conditions of the second step, we first note that $(II')$ is straightforward to verify.

Satisfying $(I')$ only requires $\eta^0$ to satisfy one (determinantal) equation, so allowing one free variable is sufficient. We choose $\eta^0_{2n}$ as this free variable, and we will recover the formula in~\eqref{eq:eta-2n}.  Namely, we let all other coordinates $\eta_i^0$ have the form given above (for $1 \leq i \leq 2n-1$), and then, recalling~\eqref{eq:T}, the matrix representation of $T_{\eta^0}$ is:
\begin{equation}
\label{T0}
\begin{bmatrix}
2 \lambda + \epsilon & \lambda & 0 & &  \cdots & 0\\
\lambda & 2\lambda + \epsilon & \lambda & 0 & \cdots  & 0\\ 
0 & \lambda & \ddots  & \ddots \ \ \ &  &  \vdots  \\ 
\vdots & 0 & \ddots &  & & 0 \\
0 & \vdots &  & & \lambda + \lambda(m+1) + \epsilon & \lambda(m+1) \\
-m \lambda & 0 & \cdots & 0 & \lambda(m+1)& \lambda(m+1) + \eta_{2n}^0
\end{bmatrix}.
\end{equation}

Expanding (\ref{T0}) along the bottom row, we obtain the determinant of $T_{\eta^{0}}$:
\begin{equation}
\label{detteta}
\det(T_{\eta^0}) = (-1)^n m(m+1) \lambda^n - \lambda^2(m+1)^2 \daleth_{n-2} + (\lambda (m+1) + \eta^0_{2n}) \daleth_{n-1}~,
\end{equation}
where we recall that
 $\daleth_i$ is $i^{th}$ principal minor of $T_{\eta^0}$.

The determinant of tridiagonal matrices can be solved recursively \cite{tridiagonal}. For our matrix, it is easy to verify the following recursion for $i \leq n-2$, which is independent of $m$:
\begin{align*}
\daleth_{i+2} &= (2\lambda + \epsilon) \daleth_{i+1} - \lambda^2\daleth_i~,
\end{align*}
with initial values $\daleth_0 := 1$ and $\daleth_1 = 2\lambda + \epsilon$.
Notice that $\daleth_{n-1}$ must be treated separately, because the $(n-1)^{st}$ row contains $\eta^0_{n-1}$, which is a function of $m$. Using standard methods we can get the generating function of the recurrence. 
\begin{equation*}
\daleth_i = \frac{1}{2^{i+1} c_1} * \left( -c_2(c_2 - c_1)^i + c_1(c_2 - c_1)^i + c_2(c_1 + c_2)^i + c_1(c_1 + c_2)^i \right)~,
\end{equation*}
where $c_1 = (\epsilon)^{\frac{1}{2}} (\epsilon + 4 \lambda)^{\frac{1}{2}}$ and $c_2 = \epsilon + 2 \lambda$. Notice here that $\daleth_i$ is always positive for sufficiently small $\epsilon$. A formula for $\daleth_{n-1}$ is given using the formula for tridiagonal matrices \cite{tridiagonal}:
\begin{equation*}
\daleth_{n-1} = \daleth_{n-2}(\lambda (m+2) + \epsilon) - \lambda^2 \daleth_{n-3}~.
\end{equation*}
With this recurrence solved, we set equation (\ref{detteta}) to zero and then derive the explicit function for $\eta^0_{2n}$ from (\ref{detteta}) in terms of $m, \lambda,$ and $\epsilon$; this is the formula in~\eqref{eq:eta-2n}.  

\end{example}

\noindent
{\bf The third step} is to construct a nonzero vector $\delta \in \mathbb{R}^{|\Spec|}$ in the nullspace of $T_{\eta^0}$, i.e. such that 
$T_{\eta^0} \cdot \delta = 0.$  (Such a vector $\delta$ exists because 
$\det(T_{\eta^0})=0$.) 

\begin{example} \label{ex:delta} 
For our example $\widetilde{K}_{m,n}$ (with $m \geq 2$ and $n \geq 3$ odd), we claim that the vector $\delta$ whose coordinates are defined as follows is in the nullspace of (\ref{T0}): 
\begin{equation} \label{eq:delta}
\delta_k= 
\begin{cases}
    \delta_1 & \text{ if }k=1  \\ 
    \frac{-(2\lambda+\epsilon)}{\lambda}\delta_{k-1}-\delta_{k-2} & \text{ if } 2 \leq k \leq n-1\\
    \frac{-(\lambda(m+2)+\epsilon)}{\lambda(m+1)}\delta_{n-1}-\frac{1}{m+1}\delta_{n-2}  & \text{ if } k=n~,
\end{cases}
\end{equation}
where we introduce $\delta _0=0$ for convenience in solving the recurrence relation, and $\delta_1\neq 0$ is our free variable.
Note that the last coordinate, $\delta_n$, has a different formula, because the $(n-1)^{st}$ row in (\ref{T0}) used to define $\delta_n$ contains terms dependent on $m$ that do not satisfy the recurrence.

To see that $\delta$ is a nonzero vector in the nullspace of $T_{\eta^0}$, notice that the conditions on $\delta$ that state that its inner product with each of the the first $n-1$ rows of $T_{\eta^0}$ coincide precisely with the $n-1$ recurrences in the definition of $\delta$~\eqref{eq:delta}.  We claim that the last row of $T_{\eta}$ is linearly dependent on the other rows, and from this we will conclude that the last row of $T_{\eta_0}$ automatically has zero inner product with $\delta$.  To see this, we recall that $\det T_{\eta^0}=0$ by construction, so we need only show that the first $n-1$ rows of $T_{\eta^0}$ are linearly independent. Assume, to the contrary, that there is a non-trivial linear combination of the first $n-1^{st}$ rows of $T_{\eta^0}$ that adds to 0.  Note that in the first $n-1$ rows only the last one contains an entry in the last column, namely $\eta^0_{n-1}$. Since $\eta^0_{n-1}\neq 0$, the corresponding scalar of the $n-1^{st}$ row should be 0, thus annihilating the entire row. In the same manner, the corresponding scalar for the $n-2^{nd}$ row would be equal to 0. Repeating the process shows that the only linear combination that adds to 0 is the trivial one, thus, arriving at a contradiction.  So, $\delta$ is in the nullspace of $T_{\eta^0}$.

Again, by using standard techniques for analyzing recurrences, we find the generating function for each of the first $n-1$ entries of $\delta$:
\begin{equation*}
\delta_{k}=\delta _{1} \lambda \cdot \frac{(\sqrt{4\lambda\epsilon+\epsilon^{2}}-(2\lambda+\epsilon))^{k}-(-\sqrt{4\lambda\epsilon+\epsilon^{2}}-(2\lambda+\epsilon))^{k}}{2^{k} \lambda^{k}\sqrt{4 \lambda \epsilon+\epsilon^{2}}}~,
\end{equation*}
for $1 \leq k \leq n-1$.
\end{example}

\noindent
{\bf The final step} is to use the vectors $\eta^0$ and $\delta$ (or, as we will see, a sufficiently scaled version of $\delta$) from the previous two steps to construct a certificate of multistationarity~\cite[proof of Lemma~4.1]{CF}; namely,
the internal (true) and outflow reaction rates are: 
\begin{equation*}
r_{y \rightarrow y'} = \frac{\langle y, \delta \rangle}{e^{\langle y , \delta \rangle} - 1} \eta^0_{y \to y'}  \quad \quad {\rm for~all~} \  y \rightarrow y' \in \mathcal{R}_T\cup \mathcal{R}_O~,
\end{equation*}
the inflow reaction rates are the coordinates of the following vector: 
\begin{equation} \label{eq:inflow}
\left( r_{0 \to X_i} \right) ~=~
\sum\limits_{y \to y'} \eta^0_{y \to y'}(y-y') ~ \in~ \mathbb{R}_+^{|\Spec|}
~,
\end{equation}
and the two steady states are:
\begin{align*}
\mathbf{x^*} = (1, 1, ..., 1) \quad \quad {\rm and} \quad \quad 
\mathbf{x^\#} = (e^{\delta_1}, e^{\delta_2}, ..., e^{\delta_{|\mathcal{S}|}})~.
\end{align*}
Craciun and Feinberg showed that for sufficiently small scaling of $\delta$, all inflow rates~\eqref{eq:inflow} are positive~\cite{CF}.


\begin{example}
For $\widetilde{K}_{m,n}$, with $m \geq 2$ and $n=3,5,7,9,11$, we checked that $\delta_1=1$ suffices.  Details for the $n=3$ case are provided in Remark~\ref{rmk:inflow-pos-3}.
\end{example}

Summarizing what we accomplished above, we have closed-form expressions for reaction rate constants and steady states that show that the sequestration network is multistationary:
%
%
\begin{theorem} \label{thm:ratescon}
Consider positive integers $m \geq 2$ and $n\in\{3,5,9,11\}$.  
Let $\delta \in \mathbb{R}^n$ be as in (\ref{eq:delta}) with $\delta_1=1$.
Also, let $\eta^0$ be as is~\eqref{eq:eta-2n}\footnote{In fact, this theorem will hold for any larger $n$ for which the last coordinate of $\eta^0$ as defined in~\eqref{eq:eta-2n} can be made to be positive.}.  
Then, for the following internal (true) and outflow reaction rates: 
\begin{equation*} 
r_i = \frac{\langle y_i, \delta \rangle}{e^{\langle y_i , \delta \rangle} - 1} \eta_i^0 \ \ \ \ \ {\rm for~all~} \  i \in \{1, 2, ..., 2n\}~,
\end{equation*}
and the following
inflow reaction rates:
\begin{align*}
r_{2n+1}&=r_1+r_n+r_{n+1}\\ 
r_{2n+i}&=r_{i-1}+r_i+r_{n+i} \quad \quad {\rm for~all~} \text{ } 2\leq i \leq n-1\\
r_{3n}&=r_{n-1}+r_{2n}-mr_n~,
\end{align*}
the concentrations: 
\begin{align}
\mathbf{x^*} ~=~ (1, 1, ..., 1) \quad  \quad {\rm and}  
\quad \quad \mathbf{x^\#} &= (e^{\delta_1}, e^{\delta_2}, ..., 
e^{\delta_n}) \label{concentration2}
\end{align}
both are positive steady states of the mass-action kinetics system defined by $\widetilde{K}_{m,n}$ and the reaction rates $r_i$ above.
\end{theorem}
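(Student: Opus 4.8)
The plan is to verify directly that each of the two concentration vectors annihilates the mass-action vector field $f(\mathbf{x})=\Gamma R(\mathbf{x})$, leaning on the two facts already established in the preceding steps: the inflow rates were defined precisely to balance the internal and outflow reactions at the all-ones vector, and $\delta$ lies in the nullspace of $T_{\eta^0}$ (Example~\ref{ex:delta}). First I would write the field in reaction-coordinate form $f(\mathbf{x})=\sum_{y\to y'\in\mathcal{R}_T\cup\mathcal{R}_O}r_{y\to y'}\mathbf{x}^y(y'-y)+\sum_i r_{0\to X_i}e_i$, separating internal/outflow reactions from inflows. Evaluating at $\mathbf{x}^*=(1,\dots,1)$, every reactant monomial $\mathbf{x}^y$ equals $1$, so $f(\mathbf{x}^*)=-\sum_{y\to y'}r_{y\to y'}(y-y')+(r_{0\to X_i})_i$. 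Since the stated inflow rates equal $\sum_{y\to y'}r_{y\to y'}(y-y')$---a reading I would confirm by matching the explicit formulas for $r_{2n+i}$ in the statement against the columns of $\Gamma$ in~\eqref{eq:stoic-mat}---this difference vanishes and $\mathbf{x}^*$ is a steady state.

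The heart of the argument is $\mathbf{x}^\#$. At $\mathbf{x}^\#=(e^{\delta_1},\dots,e^{\delta_n})$ the monomial for reaction $y\to y'$ is $\mathbf{x}^y=e^{\langle y,\delta\rangle}$, so subtracting the (vanishing) value at $\mathbf{x}^*$ gives $f(\mathbf{x}^\#)=\sum_{y\to y'}r_{y\to y'}(e^{\langle y,\delta\rangle}-1)(y'-y)$. The defining formula $r_{y\to y'}=\frac{\langle y,\delta\rangle}{e^{\langle y,\delta\rangle}-1}\eta^0_{y\to y'}$ is engineered precisely so that $r_{y\to y'}(e^{\langle y,\delta\rangle}-1)=\langle y,\delta\rangle\,\eta^0_{y\to y'}$, whence $f(\mathbf{x}^\#)=-\sum_{y\to y'}\eta^0_{y\to y'}\langle y,\delta\rangle(y-y')=-T_{\eta^0}(\delta)$, which is zero because $\delta$ was built to lie in the nullspace of $T_{\eta^0}$. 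This telescoping is the key identity and the real content of the step; the rest is bookkeeping.

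Finally I would check that the stated rates are genuinely positive, so that the system is a legitimate mass-action system. For the internal and outflow rates this is immediate: the function $t\mapsto t/(e^t-1)$ is everywhere positive (with value $1$ at $t=0$) and each $\eta^0_i$ is positive by the construction in~\eqref{eq:eta-2n}, so each $r_i>0$; the inflow rates for $X_1,\dots,X_{n-1}$ are sums of positive $r_i$'s and hence positive. The one delicate inflow rate is $r_{3n}=r_{n-1}+r_{2n}-m r_n$, whose negative term $-mr_n$ threatens positivity, and this is the main obstacle---indeed, it is exactly why the statement commits to the normalization $\delta_1=1$ and the finite list of exponents $n$. I would confirm $r_{3n}>0$ by substituting the closed forms for $\delta$ and $\eta^0$ into $r_{n-1}+r_{2n}-mr_n$ and checking the sign case by case (the $n=3$ computation being carried out in Remark~\ref{rmk:inflow-pos-3}); alternatively, scaling $\delta$ toward $0$ sends $r_n\to\eta^0_n$ while preserving the balance, recovering the general positivity guarantee of Craciun and Feinberg.
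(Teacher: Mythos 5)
Your proposal is correct and follows essentially the same route as the paper: the paper presents Theorem~\ref{thm:ratescon} as a summary of the four steps of Section~\ref{sec:generate} and defers the actual verification to the proof of Lemma~4.1 in~\cite{CF}, and your telescoping identity $r_{y\to y'}(e^{\langle y,\delta\rangle}-1)(y'-y)=\langle y,\delta\rangle\,\eta^0_{y\to y'}(y'-y)$ summing to $-T_{\eta^0}(\delta)=0$ is precisely that deferred argument, while your positivity check of $r_{3n}$ matches the paper's case-by-case verification for $n=3,5,7,9,11$ with $\delta_1=1$ (Remark~\ref{rmk:inflow-pos-3}). If anything, your write-up is more explicit than the paper's, and it correctly uses the inflow rates as stated in the theorem (sums of the $r_i$) rather than the $\eta^0$-version displayed in~\eqref{eq:inflow}, which is the form actually needed for $\mathbf{x^*}$ to be a steady state.
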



\begin{remark} \label{rem:degen}
One may wonder whether or not the determinant optimization method has the potential to create degenerate steady states.  Indeed, if we could prove that this method always constructs nondegenerate steady states, then this would resolve Conjecture~\ref{Conjecture}.  However, this is not the case.

We determined this by analyzing $\widetilde{K}_{2,3}$ as in Theorem~\ref{thm:ratescon}. By letting $\epsilon$ be a free variable we compute the parametrized determinants $\det(df(\mathbf{x^{*}}))$ and $\det(df(\mathbf{x^{\#}}))$ as functions of $\epsilon$. Both functions are easily checked to be continuous for positive values of $\epsilon$,
and from the graph (Figure~\ref{remark311}), we can see easily that there exist choices of $\epsilon$ for which one of the two steady states is degenerate.  
More precisely $\det(df(\mathbf{x^{*}}))=0$ for some $\epsilon \in (0.12,0.125)$ and $\det(df(\mathbf{x^{\#}}))=0$ for some $\epsilon \in (0.240,0.241)$ and some $\epsilon \in (1.159,1.160)$.

\begin{figure}[h]
\includegraphics[width=8cm]{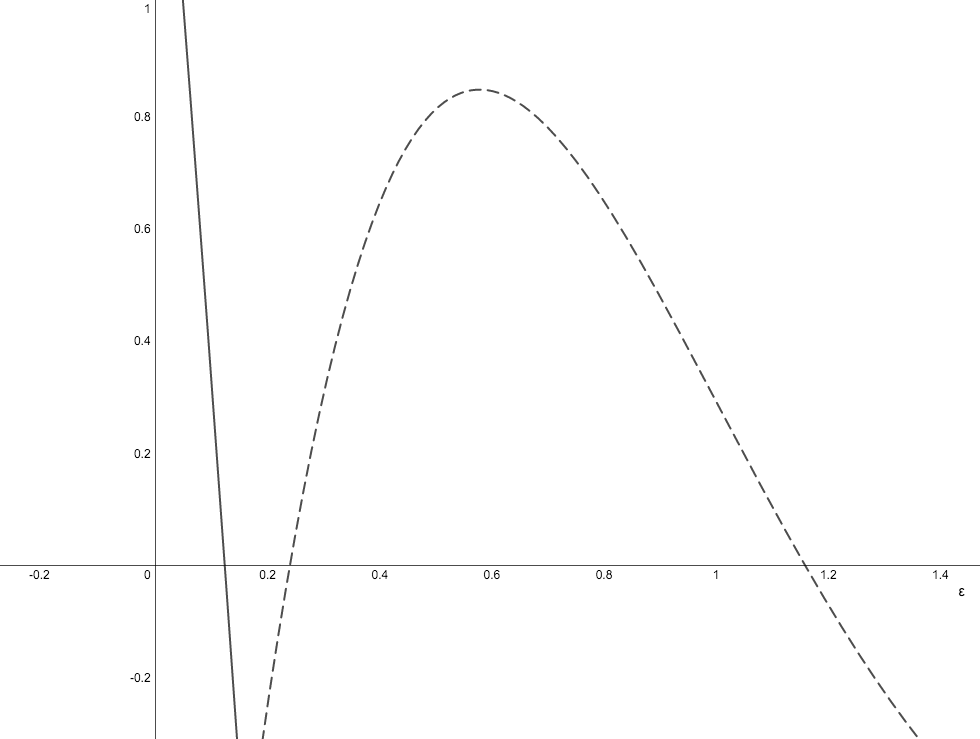}
\centering
\caption{Graphs of $\det(df(\mathbf{x^{*}}))$ {\em solid} and $\det(df(\mathbf{x^{\#}}))$ {\em dashed} as functions of $\epsilon$}.\label{remark311}
\end{figure}

\end{remark}

\section{Resolving Conjecture~\ref{Conjecture} for the $n=3$ case} \label{sec:n=3}
Recall that Conjecture~\ref{Conjecture} asserts that $\widetilde{K}_{m,n}$ admits multiple \emph{nondegenerate} positive steady states, for integers $m \geq 2$ and $n \geq 3$ with $n$ odd.
The main result of this section (Theorem~\ref{thm:resolve-conj}) resolves the conjecture when $n=3$. 
To accomplish this, we first write down 
rate constants for this $n=3$ case for which there are two steady states $\mathbf{x^*}$ and $\mathbf{x^\#}$; 
these values were obtained by the determinant optimization method in the
previous section for the general $\widetilde{K}_{m,n}$ case
(Proposition~\ref{prop:n=3}).
We then resolve the conjecture for $n = 3$ by proving that $\mathbf{x^*}$ and $\mathbf{x^\#}$ are nondegenerate.

\subsection{Reaction rate constants for which $\widetilde{K}_{m,3}$ is multistationary}
Proposition~\ref{prop:n=3} below specializes Theorem~\ref{thm:ratescon} to the $n=3$ case.  Following the description in Section~\ref{sec:generate}, $\lambda = 1$ and $\epsilon = 0.1$ will suffice, and then we obtain 
%
    \begin{align*} 
    \eta_0 ~=~ \left(   \lambda, ~\lambda(m+1), ~\lambda, ~\epsilon,~ \epsilon,~  \frac{m^{2} - 0.31m - 1.31}{2.1m + 3.41} \right)^T
    \end{align*}
from the second step of the determinant optimization method.  Next, in the third step, we find that the following vector spans the nullspace of $T_{\eta^0}$:
\begin{equation*} 
\delta = 
\left(1, ~ -2.1, ~ \frac{2.1m + 3.41}{m+1}~ \right)^T~.
\end{equation*}
Thus, Theorem~\ref{thm:ratescon} specializes to:

\begin{proposition} \label{prop:n=3}
Consider any integer $m \geq 2$, and the following internal, outflow, and internal reaction rates:
\begin{equation} \label{eq:rates-n=3}
\begin{matrix}
r_{1}=\frac{-1.1}{e^{-1.1}-1}\approx 1.65 &
r_{2}=\frac{1.31}{e^{\frac{1.31}{m+1}}-1} &
r_{3}=\frac{1}{e-1}\approx.58\\
r_{4}=\frac{.1}{e-1}\approx .06 &
r_{5}=\frac{-.21}{e^{-2.1}-1}\approx .24 &
r_{6}=\frac{m-1.31}{e^{\frac{2.1m+3.41}{m+1}}-1} \\
r_{7}= r_{1}+r_{3}+r_{4} \approx 2.29  &
r_{8}=r_{1}+r_{2}+r_{5} &
r_{9}=r_{2}+r_{6}-mr_{3}~.
\end{matrix}~
\end{equation}
Then for the mass-action kinetics system defined by the fully open sequestration network $\widetilde{K}_{m,3}$ and the above rate constants $r_{i}$, both 
$\mathbf{x^*}  =  (1, 1, 1) $ and $\mathbf{x^\#}  =  \left( e, e^{-2.1}, e^{\frac{2.1m+3.41}{m+1}} \right)$ are positive steady states.
\end{proposition}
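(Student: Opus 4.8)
The plan is to deduce Proposition~\ref{prop:n=3} as the $n=3$ instance of Theorem~\ref{thm:ratescon}; the only task is then to carry out the substitutions $n=3$, $\lambda=1$, $\epsilon=0.1$ in the closed forms produced by the determinant optimization method and to check that they collapse to the displayed data. First I would fix the reactant vectors of $\widetilde{K}_{m,3}$ in the ordering of Section~\ref{sec:bkrd}: the internal reactions give $y_1=(1,1,0)$, $y_2=(0,1,1)$, $y_3=(1,0,0)$, and the outflow reactions give $y_4=(1,0,0)$, $y_5=(0,1,0)$, $y_6=(0,0,1)$. These are all that is needed to evaluate the inner products appearing in the rate formula.

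Next I would evaluate $\eta^0$ from~\eqref{eq:eta-2n}. For $n=3$ the relevant principal minors of~\eqref{T0} are $\daleth_1=2\lambda+\epsilon$ and $\daleth_0=1$, so its last coordinate becomes $\eta^0_6=\frac{(m+1)\left(m\lambda^3+\lambda^2(m+1)(2\lambda+\epsilon)\right)}{(\lambda(m+2)+\epsilon)(2\lambda+\epsilon)-\lambda^2}-\lambda(m+1)$. Setting $\lambda=1$ and $\epsilon=0.1$, the first term reduces to $\frac{(m+1)(3.1m+2.1)}{2.1m+3.41}$, and after subtracting $(m+1)$ the result is $\eta^0_6=\frac{(m+1)(m-1.31)}{2.1m+3.41}=\frac{m^2-0.31m-1.31}{2.1m+3.41}$, matching the stated $\eta^0$. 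Running the three-term recurrence~\eqref{eq:delta} with $\delta_1=1$ then gives $\delta_2=-(2+0.1)=-2.1$ and $\delta_3=\frac{(m+2.1)(2.1)-1}{m+1}=\frac{2.1m+3.41}{m+1}$, which reproduces both $\delta$ and $\mathbf{x^\#}=\bigl(e,e^{-2.1},e^{(2.1m+3.41)/(m+1)}\bigr)$.

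With $\delta$ in hand I would compute $\langle y_1,\delta\rangle=\delta_1+\delta_2=-1.1$, $\langle y_2,\delta\rangle=\delta_2+\delta_3=\frac{1.31}{m+1}$, $\langle y_3,\delta\rangle=\langle y_4,\delta\rangle=\delta_1=1$, $\langle y_5,\delta\rangle=\delta_2=-2.1$, and $\langle y_6,\delta\rangle=\delta_3=\frac{2.1m+3.41}{m+1}$. Substituting these and the coordinates of $\eta^0$ into $r_i=\frac{\langle y_i,\delta\rangle}{e^{\langle y_i,\delta\rangle}-1}\,\eta^0_i$ yields exactly the six internal and outflow rates in~\eqref{eq:rates-n=3}; I would highlight the two nontrivial cancellations, in $r_2$ (where the factor $m+1$ in $\eta^0_2$ cancels the $m+1$ inside $\langle y_2,\delta\rangle$) and in $r_6$ (where both $m+1$ and $2.1m+3.41$ cancel), leaving the clean forms shown. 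The three inflow rates $r_7,r_8,r_9$ are precisely the $n=3$ instances of the inflow formulas of Theorem~\ref{thm:ratescon}. Since every entry matches, Theorem~\ref{thm:ratescon} applies verbatim and certifies that $\mathbf{x^*}=(1,1,1)$ and $\mathbf{x^\#}$ are positive steady states; conceptually, $\mathbf{x^*}$ is a steady state because the inflow rates are defined to cancel the internal and outflow fluxes, while $\mathbf{x^\#}$ is a steady state because $\delta$ lies in the nullspace of $T_{\eta^0}$ (the third step), so the flux-difference $\langle y_i,\delta\rangle\eta^0_i$, summed against the reaction vectors, vanishes componentwise.

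The one point beyond routine substitution is verifying that all the rates are genuinely positive for every $m\geq2$, which is what makes the $r_i$ valid rate constants (and the certificate a certificate). Positivity of $\eta^0_6$ and of $r_6$ follows from $m^2-0.31m-1.31=(m+1)(m-1.31)>0$ for $m\geq2$, and all other internal and outflow rates are manifestly positive. The delicate inequality is positivity of the inflow rate $r_9=r_2+r_6-mr_3$, which does not follow formally from the construction (the method guarantees it only after sufficiently scaling $\delta$, whereas here $\delta_1=1$ is unscaled). I would therefore verify $r_9>0$ uniformly in $m$ directly---e.g.\ using $e^{1.31/(m+1)}-1<\tfrac{1.31}{m+1}e^{1.31/(m+1)}$ to bound $r_2$ from below by $(m+1)/e^{1.31/(m+1)}$, which grows like $m+1$, while $mr_3$ grows only like $0.582\,m$---and I expect this uniform-in-$m$ estimate to be the main obstacle, the rest being bookkeeping.
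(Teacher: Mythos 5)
Your proposal matches the paper's treatment: Proposition~\ref{prop:n=3} is obtained there exactly as you describe, by specializing Theorem~\ref{thm:ratescon} to $n=3$ with $\lambda=1$, $\epsilon=0.1$, computing $\eta^0_6$ and $\delta$ from~\eqref{eq:eta-2n} and~\eqref{eq:delta}, and reading off the rates, and all of your substitutions and cancellations are correct. The one delicate point you flag, positivity of $r_9$, is handled in the paper's Remark~\ref{rmk:inflow-pos-3} via the bound $r_2\geq m$ of Lemma~\ref{r2}; your more elementary estimate $r_2>(m+1)e^{-1.31/(m+1)}$ also suffices, so this is only a cosmetic difference.
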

\noindent
Note that in Proposition~\ref{prop:n=3}, only $\mathbf{x}^{\#}_3$, $r_{2}$,  $r_{6}$, $r_8$, and $r_9$ depend on $m$. 

\begin{remark} \label{rmk:inflow-pos-3}
The only reaction rate in~\eqref{eq:rates-n=3} that is not obviously positive is the inflow rate $r_9$, so we verify it here:
\begin{align*}
r_{9}~&=~r_{2}+r_{6}-mr_{3}
~>~ r_2 +0 - m \left( \frac{1}{e-1} \right)
~\geq~ m-m \left( \frac{1}{e-1} \right) ~>~0~,
\end{align*}
where the second-to-last inequality follows from Lemma \ref{r2} below.
\end{remark}


\subsection{Bounding rates and steady states of $\widetilde{K}_{m,3}$}
Here we give upper and lower bounds which we will use to prove that $\mathbf{x^*}$ and $\mathbf{x^\#}$ are nondegenerate.
The following bounds are on the third coordinate of $\mathbf{x^\#}$:
\begin{equation}
\label{x3}
e^\frac{2.1y + 3.41}{y + 1} ~\geq~ \mathbf{x}^\#_3 = e^\frac{2.1m + 3.41}{m + 1} ~>~ e^{2.1} \quad \quad {\rm for~all~}  m \ \geq \ y \  \geq 0~.
\end{equation}
The first inequality in~\eqref{x3} follows from the easy fact that $e^\frac{2.1m + 3.41}{m + 1}$ is a decreasing function when $m>0$, and the second inequality is straightforward.

The proofs of the following two upper/lower bounds are in Appendix~A:

\begin{lemma}[Bounds on $r_2$]
\label{r2}
When $\lambda = 1$ and $\epsilon = 0.1$, the rate constant $r_2$ defined in~\eqref{eq:rates-n=3} satisfies the following inequalities for all $m \geq 2$:
\begin{align*}
m + 1 ~>~ r_2 ~\geq~ m ~.
\end{align*}
\end{lemma}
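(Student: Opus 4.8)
The plan is to establish the two-sided bound on $r_2 = \frac{1.31}{e^{1.31/(m+1)}-1}$ directly from its closed form, treating $m$ as a continuous parameter. Writing $u := \frac{1.31}{m+1}$, note that as $m$ ranges over $[2,\infty)$ the quantity $u$ ranges over $(0, 1.31/3]$, and $r_2 = \frac{1.31}{e^u - 1} = \frac{u(m+1)}{e^u-1}$. So the task reduces to comparing $\frac{u}{e^u-1}$ against $\frac{m}{m+1}$ and $1$ respectively. The key analytic input is the elementary fact that $e^u - 1 > u$ for $u > 0$ (giving the upper bound immediately) together with a matching lower estimate $e^u - 1 < \frac{u}{1 - \beta u}$ or $e^u-1 \le u + \tfrac{u^2}{2}e^u$-type inequality valid on the relevant range of $u$ (for the lower bound).

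First I would prove the upper bound $r_2 < m+1$. Since $e^u - 1 > u$ for all $u > 0$, we have $\frac{1.31}{e^{1.31/(m+1)}-1} < \frac{1.31}{1.31/(m+1)} = m+1$, which is exactly the claimed strict inequality. This step is essentially free.

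The lower bound $r_2 \ge m$ is the substantive part. Rearranging, the claim $\frac{1.31}{e^u-1} \ge m = (m+1) - 1 = \frac{1.31}{u} - 1$ is equivalent (after clearing the positive denominators) to showing $1.31 u \ge (e^u - 1)(1.31 - u)$ on the interval $u \in (0, 1.31/3]$, i.e. that a certain single-variable function $g(u) := 1.31 u - (e^u-1)(1.31 - u)$ is nonnegative there. I would verify this by checking $g(0) = 0$ and analyzing $g'(u)$ (and if needed $g''$) to confirm $g$ is increasing, or equivalently by substituting the Taylor bound $e^u \le 1 + u + \tfrac{u^2}{2} + \tfrac{u^3}{6}e^{u}$ and reducing to a polynomial inequality valid on the compact interval. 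Because $1.31/3 \approx 0.437$ is small, the truncated-series estimates are comfortably tight, so the polynomial inequality should hold with room to spare.

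The main obstacle is the lower bound: unlike the upper bound, it is not a consequence of a single textbook inequality and requires a careful estimate of $e^u-1$ from above that remains sharp enough near $u = 1.31/3$. The delicate point is to choose an upper approximant for $e^u$ (a low-order Taylor polynomial with an explicit remainder, or a convexity/monotonicity argument on $g$) that is simultaneously simple enough to yield a clean polynomial inequality and accurate enough that the resulting bound does not degrade below $m$ at the endpoint $m=2$ (where $u$ is largest). Since the statement restricts to $m \ge 2$ and the endpoint is the worst case, I expect the whole argument to come down to verifying the inequality at and near $u = 1.31/3$, which the monotonicity analysis of $g$ handles cleanly.
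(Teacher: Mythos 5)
Your proposal is correct, and for the substantive part (the lower bound) it takes a genuinely different route from the paper. For the upper bound both arguments reduce to the same elementary fact $e^u>1+u$: the paper extracts it from the monotone convergence of $(1+\tfrac{y}{x})^{x}$ to $e^{y}$, while you invoke it directly, which is cleaner. For the lower bound, the paper substitutes $z=1/m$, takes logarithms to reduce the claim to $\log(1+az)\ge \frac{az}{1+z}$ with $a=1.31$, and then proves this by writing the difference as $\int_0^{az}\bigl(\frac{1}{1+t}-\frac{1}{1+z}\bigr)\,dt$ and splitting the integral into three pieces whose signs are controlled separately. You instead clear denominators to reduce to the nonnegativity of $g(u)=1.31u-(e^u-1)(1.31-u)$ and propose a monotonicity argument; this does close, and you are right that the second derivative is needed: one computes $g(0)=0$, $g'(u)=0.31-e^{u}(0.31-u)$ so $g'(0)=0$, and $g''(u)=e^{u}(0.69+u)>0$, whence $g'>0$ and $g>0$ for all $u>0$. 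Your route is more elementary and self-contained (no integral bookkeeping), and it actually yields the strict bound $r_2>m$ for every $m>0$, not just $m\ge 2$; in particular your concern that the endpoint $u=1.31/3$ is the delicate case is unfounded, since $g$ increases away from $0$ and the inequality only gains slack there. What the paper's integral decomposition buys is a template that generalizes to comparing $\log(1+az)$ with $\frac{az}{1+z}$ for other values of $a$ near $1$, but for this specific lemma your argument is the shorter one.
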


\begin{lemma}[Bounds on $r_6$]
\label{r6}
When $\lambda = 1$ and $\epsilon = 0.1$, the rate constant $r_6$ defined in~\eqref{eq:rates-n=3} satisfies the following inequalities:
$$ 0.14m~ >~ r_6 ~> ~0.13m- 0.5~, $$
where the upper bound holds for $m \geq 2$, and the lower bound holds for $ m \geq 20$.
\end{lemma}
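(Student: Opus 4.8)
The plan is to establish the two-sided bound on $r_6 = \frac{m - 1.31}{e^{(2.1m+3.41)/(m+1)} - 1}$ by controlling the exponential denominator, since the numerator $m - 1.31$ is already simple. First I would analyze the exponent $g(m) := \frac{2.1m + 3.41}{m+1}$: as noted in~\eqref{x3}, this is a decreasing function of $m$ for $m > 0$, with $g(m) \to 2.1^+$ as $m \to \infty$ and $g(2) = \frac{7.61}{3}$. Thus for $m \geq 2$ we have $g(m) \in (2.1, g(2)]$, and the denominator $e^{g(m)} - 1$ is squeezed between the constants $e^{2.1} - 1$ and $e^{g(2)} - 1$. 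This reduces the problem to comparing the rational function $\frac{m-1.31}{C}$ (for the appropriate constant $C$ in each direction) against the target linear bounds $0.14m$ and $0.13m - 0.5$.

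For the \textbf{upper bound} $r_6 < 0.14m$, I would use that the denominator is at least $e^{2.1} - 1$ (its infimum over $m \geq 2$, approached but not attained since $g$ is strictly decreasing with infimum $2.1$). Then $r_6 \leq \frac{m - 1.31}{e^{2.1} - 1} < \frac{m}{e^{2.1}-1}$, and since $e^{2.1} - 1 \approx 7.166$, we get $\frac{1}{e^{2.1}-1} \approx 0.1395 < 0.14$. So I would verify the clean numerical inequality $\frac{1}{e^{2.1}-1} < 0.14$ (equivalently $e^{2.1} > 1 + \frac{1}{0.14} = \frac{57}{7} \approx 8.143$, which holds since $e^{2.1} \approx 8.166$) and conclude $r_6 < \frac{m}{e^{2.1}-1} < 0.14m$ for all $m \geq 2$.

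For the \textbf{lower bound} $r_6 > 0.13m - 0.5$, valid only for $m \geq 20$, I would bound the denominator above by $e^{g(m)} - 1 \leq e^{g(2)} - 1$ (its supremum over $m \geq 2$, since $g$ is decreasing). With $D := e^{g(2)} - 1 = e^{7.61/3} - 1$, this gives $r_6 \geq \frac{m - 1.31}{D}$. I would then check that $\frac{m - 1.31}{D} > 0.13m - 0.5$ holds for $m \geq 20$; this is a linear inequality in $m$ once $D$ is pinned to a numerical interval, so it reduces to verifying the slope condition $\frac{1}{D} \geq 0.13$ (or handling the intercept so the two lines cross below $m = 20$). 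Since $g(2) = \frac{7.61}{3} \approx 2.537$ and $e^{2.537} \approx 12.64$, we have $D \approx 11.64$ and $\frac{1}{D} \approx 0.0859$, which is \emph{smaller} than $0.13$; this signals that the crude constant bound on the denominator is too lossy to get the stated lower bound, and is precisely why the lower bound requires $m \geq 20$ rather than $m \geq 2$.

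The main obstacle is therefore the lower bound: replacing $e^{g(m)}$ by the constant $e^{g(2)}$ throws away too much, so I would instead keep the $m$-dependence of the exponent. For large $m$, $g(m) = 2.1 + \frac{1.31}{m+1}$ is close to $2.1$, so $e^{g(m)} = e^{2.1} e^{1.31/(m+1)}$ and I would bound the small correction factor $e^{1.31/(m+1)}$ from above using a first-order estimate such as $e^{t} \leq 1 + t e^{t_0}$ on a bounded range, or simply $e^{1.31/(m+1)} \leq e^{1.31/21}$ for $m \geq 20$. This gives a sharper denominator bound $e^{g(m)} - 1 \leq e^{2.1} e^{1.31/21} - 1$, whose reciprocal I would then compare against $0.13$; the resulting linear inequality in $m$ should clear for $m \geq 20$, and the threshold $m \geq 20$ is exactly the point where this refined estimate becomes strong enough. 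Both final inequalities are elementary numerical checks once the exponent is bounded, so no delicate argument beyond careful tracking of the constants is needed.
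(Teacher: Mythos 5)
Your proposal is correct and follows essentially the same route as the paper: both bounds exploit the monotonicity of the exponent $g(m)=\frac{2.1m+3.41}{m+1}$, with the upper bound reducing to the numerical fact $e^{2.1}>\frac{1.14}{0.14}\approx 8.143$ and the lower bound reducing to evaluating the exponential at $m=20$ (the paper bounds $e^{g(20)}<8.692$ and compares linear functions of $m$, which is the same computation as your $\frac{1}{e^{2.1}e^{1.31/21}-1}\geq 0.13$). The only caveat is that the final check you leave as ``should clear'' is razor-thin---$\frac{1}{e^{g(20)}-1}\approx 0.13001$---so it must be carried out to four or five significant figures, exactly as the paper's margin $8.692\times 0.13 = 1.12996 < 1.13$ illustrates.
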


\subsection{Proving nondegeneracy of steady states for the network $\widetilde{K}_{m,3}$}

The main result of this section is:
\begin{theorem}[Resolution of Conjecture \ref{Conjecture} when $n = 3$] \label{thm:resolve-conj}
For integers $m \geq 2$, the network $\widetilde{K}_{m,3}$ has the capacity to admit multiple nondegenerate positive steady states.
\end{theorem}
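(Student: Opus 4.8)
The plan is to establish nondegeneracy of both steady states $\mathbf{x^*}$ and $\mathbf{x^\#}$ from Proposition~\ref{prop:n=3} by showing that the Jacobian determinant is nonzero at each. As noted after Conjecture~\ref{Conjecture}, since $\Gamma$ has full rank, it suffices to prove $\det(df(\mathbf{x^*})) \neq 0$ and $\det(df(\mathbf{x^\#})) \neq 0$. First I would specialize the general Jacobian~\eqref{eq:jac} to the case $n=3$, substituting the explicit rate constants from~\eqref{eq:rates-n=3} and the coordinates of each steady state. For $\mathbf{x^*} = (1,1,1)$ all monomials $x_i$ equal $1$, so $df(\mathbf{x^*})$ is a $3 \times 3$ matrix whose entries are explicit (though $m$-dependent) expressions in the $r_i$. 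For $\mathbf{x^\#} = (e, e^{-2.1}, e^{(2.1m+3.41)/(m+1)})$ the entries additionally carry the known exponential factors.

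The key step is then to expand each $3 \times 3$ determinant symbolically and bound it away from zero uniformly in $m \geq 2$. The natural tool is the pair of two-sided bounds on the $m$-dependent rates: Lemma~\ref{r2} gives $m+1 > r_2 \geq m$, Lemma~\ref{r6} controls $r_6$, and~\eqref{x3} bounds $\mathbf{x}^\#_3$. Since $r_1, r_3, r_4, r_5, r_7$ are explicit numerical constants independent of $m$, and only $r_2, r_6, r_8, r_9$ and $\mathbf{x}^\#_3$ depend on $m$, the determinants become low-degree expressions in $m$ once the exponential factors are absorbed into the bounds. I would collect each determinant as a polynomial (or rational function) in $m$, substitute the interval bounds to sandwich the leading and lower-order contributions, and verify the expression cannot vanish: for instance, by showing the dominant term (likely quadratic or cubic in $m$) outgrows the others for all $m \geq 2$, keeping a consistent sign.

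The main obstacle will be the case analysis in the threshold ranges of $m$, since Lemma~\ref{r6}'s lower bound only holds for $m \geq 20$. Consequently I expect to split the argument into a small-$m$ regime ($2 \leq m \leq 19$) handled by direct computation or finitely many numerical checks of $\det(df(\mathbf{x^*}))$ and $\det(df(\mathbf{x^\#}))$, and a large-$m$ regime ($m \geq 20$) handled by the uniform interval bounds. The delicate part of the large-$m$ analysis is ensuring that when the interval estimates are plugged in, the resulting upper and lower bounds on the determinant do not straddle zero; this requires the gaps in Lemmas~\ref{r2} and~\ref{r6} to be tight enough relative to the competing terms. If a naive substitution produces an inconclusive sign, I would refine by grouping terms so that cancellations are made explicit before bounding, rather than bounding term by term.

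Finally, having shown $\det(df(\mathbf{x^*})) \neq 0$ and $\det(df(\mathbf{x^\#})) \neq 0$ for all integers $m \geq 2$, I conclude that both $\mathbf{x^*}$ and $\mathbf{x^\#}$ are nondegenerate positive steady states. Since they are distinct (their coordinates differ), the network $\widetilde{K}_{m,3}$ admits multiple nondegenerate positive steady states, which is exactly the assertion of Theorem~\ref{thm:resolve-conj} and resolves Conjecture~\ref{Conjecture} in the case $n=3$.
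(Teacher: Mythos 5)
Your proposal follows essentially the same route as the paper's proof: reduce nondegeneracy to showing the two Jacobian determinants $D_1$ and $D_2$ are nonzero, bound each by a quadratic in $m$ using Lemmas~\ref{r2} and~\ref{r6} together with~\eqref{x3} and numerical estimates of the constant rates, and handle the finitely many small values of $m$ (up to $4$ for $D_1$ and up to $19$ for $D_2$, exactly because the lower bound on $r_6$ requires $m\geq 20$) by direct evaluation. The paper does precisely this, tabulating the small-$m$ determinant values in Table~\ref{table:1}, so your plan is correct and matches the published argument.
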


We will prove Theorem~\ref{thm:resolve-conj} by showing that $\mathbf{x^*}$ and $\mathbf{x^\#}$ in Proposition~\ref{prop:n=3} are nondegenerate, i.e.\  we must prove that the image of the $3 \times 3$ Jacobian matrix $df(\mathbf{x})$ at each of the steady states is equal to the image of the $3 \times 9$ matrix $\Gamma$. As stated earlier (after Conjecture~\ref{Conjecture}), since $\Gamma$ is full rank, our problem reduces to showing that $\det(df(\mathbf{x})) \neq 0 $ for both steady states and for all integers $m \geq 2$. 

We begin by displaying the Jacobian matrix~\eqref{eq:jac} of $\widetilde{K}_{m,3}$:
\begin{equation*}
df(\mathbf{x}) = 
\begin{bmatrix}
-r_1x_2 - r_3 - r_{4} & -r_1x_1 & 0 \\
-r_1x_2 & -r_1x_1 - r_2x_3 - r_5 & -r_2x_2 \\
m r_3 & -r_2x_3 & -r_2x_2 - r_6
\end{bmatrix}~.
\end{equation*}
Thus, 
our goal is to show that the following determinants (obtained by strategically cancelling and rearranging terms) are nonzero for all integers $m \geq 2$:
\begin{align}
\label{jac1}
\nonumber
D_1~&:=~ \det(df(\mathbf{x^{*}}))~=~
    r_{2}r_{1}r_{3}m ~-~ (r_{2}+r_{6})(r_{1}r_{3}+r_{1}r_{4}+r_{1}r_{5}+r_{3}r_{5}+r_{4}r_{5})  \\
    & \quad \quad \quad \quad \quad \quad \quad \quad \quad  \quad \quad ~-~ r_{2}r_{6}(r_{1}+r_{3}+r_{4}) \\
\label{jac2}
\nonumber
D_2~&:=~
\det(df(\mathbf{x^{\#}}))~=~r_{2}x^{\#}_{2}((r_{1}x^{\#}_{2}+r_{3} ~+~ r_{4})(r_{2}x^{\#}_{3})+ r_{1}x^{\#}_{1}mr_{3})  \\
& \quad \quad \quad \quad \quad \quad \quad \quad \quad \quad \quad-~ (r_{2}x^{\#}_{2}+ r_{6})(r_{1}x^{\#}_{2}+r_{3}+r_{4})(r_{1}x^{\#}_{1}+r_{2}x^{\#}_{3}+r_{5}) \nonumber \\
& \quad \quad \quad \quad \quad \quad \quad \quad \quad  \quad \quad +~ (r_{2}x^{\#}_{2}+r_{6})(r_{1}x^{\#}_{1}r_{1}x^{\#}_{2}) 
\end{align}

From its graph\footnote{Analogous graphs for larger $n$ appear in Appendix~B.}, $D_1$
appears to be increasing quadratically as a function of $m$. So, to prove that 
$D_1$ is nonzero for integer values of $m \geq 2$, we will bound it from below by a quadratic function. 
Similarly, $D_2$ 
appears to be decreasing quadratically, so we will bound it from above by another quadratic function. Using these bounds, we will then conclude that $D_1$ and $D_2$
are strictly positive and negative (respectively) after certain cutoff points of $m$, effectively showing nondegeneracy of both steady states beyond the cutoffs. Finally, we will complete the proof by 
evaluating $D_1$ and $D_2$
at the remaining integers $m$ between the $2$ and the cutoff points to show that these values are nonzero.

\begin{proof}[Proof of Theorem~\ref{thm:resolve-conj}]
We generate our rates and concentrations as in Proposition~\ref{prop:n=3}. 

Following the description immediately after Theorem~\ref{thm:resolve-conj}, we need only show that $D_1 \neq 0$ and $D_2 \neq 0$ for all integers $m \geq 2$.  
First, we bound $D_1$ by using its formula~\eqref{jac1} together with the bounds in 
Lemmas~\ref{r2} and~\ref{r6}:
\begin{align*}
D_1 
\  &> \ m^2r_{1}r_{3} - ((m+1)+0.14m)(r_{1}r_{3}+r_{1}r_{4}+r_{1}r_{5}+r_{3}r_{5}+r_{4}r_{5}) \\
& \quad \quad -~ (m+1)(0.14 m)(r_{1}+r_{3}+r_{4})~.
\end{align*}
Next, estimating the remaining rates $r_i$, which are constants (recall equations~\eqref{eq:rates-n=3}), by appropriate upper or lower bounds, we obtain:
\begin{align*}
D_1
&~>~ m^2(0.95) - ((m+1)+0.14m)1.61 -
(m+1)(0.14 m)2.29 \nonumber \\
&~=~ \ 0.6294m^2 - 2.156m - 1.61~.
\end{align*}
It is easy to show that the quadratic function which bounds ${D}_1$ above is always positive for integers $m >4$. 
So, $D_1>0$ for $m>4$.  Thus, it remains only to show that $D_1 \neq 0$ at $m=2,3,4$; indeed, those values are nonzero and are listed in Table~\ref{table:1}.

\begin{table}[ht]
\scalemath{.75}{
\centering
\begin{tabular}{||c || c c c c c c c c c  ||} 
 \hline
 $m$ & 2 & 3 & 4 & 5 & 6 & 7 & 8 & 9 & 10  \\ [0.5ex] 
 \hline
 $D_1~=~\det(df(\mathbf{x^*}))$ & 0.336 & 2.784  & 6.525 & & & & & &  \\ [1ex] 
 $D_2~=~\det(df(\mathbf{x^\#}))$ & -1.063 & -3.811 & -7.85 & -13.19 & -19.8 & -27.71 & -36.89 & -47.36 & -59.11  \\ 
 \hline\hline
 $m$ & 11 & 12 & 13 & 14 & 15 & 16 & 17 & 18 & 19  \\ [0.5ex] 
 \hline
 $D_2~=~\det(df(\mathbf{x^\#}))$ & -72.14 & -86.4 & -102 & -118.9 & -137.1 & -156.5 & -177.2 & -199.2 & -222.5 \\
 \hline
\end{tabular}
}
\caption{Determinants of the Jacobian matrices~(\ref{jac1}--\ref{jac2}) at the two steady states $\mathbf{x^*}$ and $\mathbf{x^\#}$ for the values of $m \geq 2$ before the proven bounds are valid. All of these determinants are nonzero, so the corresponding steady states are nondegenerate.}
\label{table:1}
\end{table}

Now we proceed to bound $D_2$. Again, we use its formula~\eqref{jac2} together with the bounds in~\eqref{x3} and Lemmas~\ref{r2} and~\ref{r6}:

\begin{align*}
D_2 
& ~<~ (m+1)x^{\#}_2((r_1x^{\#}_2+r_3+r_4)((m+1)x^{\#}_3)+r_1x^{\#}_1mr_3) \\
    & \quad \quad -~ (mx_2+(.13m-.5))(r_1x^{\#}_2+r_3+r_4)(r_1x^{\#}_1+mx^{\#}_3+r_5) \\
    & \quad \quad +~ ((m+1)x^{\#}_2+.14m)(r_1x^{\#}_1r_1x^{\#}_2)~.
\end{align*}
Note that we used the lower bound on $r_6$, so the above inequality holds for $m \geq 20$ (and thus we will need to check the values of $m$ between 2 and 19 separately).
In the same manner as before, we approximate all of the constants appropriately for $m \geq 20$, and then simplify:
\begin{align*}
D_2
~&<~
(m+1).13((.85)((m+1)8.7)+2.61m)- (.25m-.5)(.84)(4.72+8.16m)\\
& \quad \quad \quad \quad +~ (.13(m+1)+.14m)(.91) \\
& = -0.41295m^2+4.9437m+3.06205~. 
\end{align*}
Therefore, it is easy to see that $D_2$ is nonzero for $m\geq 20$. For $2 \leq m \leq 19$ we again refer to Table~\ref{table:1}, which completes our proof.

\end{proof}

\section{Resolving Conjecture~\ref{Conjecture} for small values of $m$ and $n$} \label{sec:small-val}
The main result of this section extends Theorem~\ref{thm:resolve-conj} to $n \leq 11$, for small $m$:

\begin{theorem}[Resolution of Conjecture \ref{Conjecture} for small $m$ and $n$] \label{thm:small-m-n}
For $m=2,3,4,5$ and $n=5,7,9,11$, the network $\widetilde{K}_{m,n}$ has the capacity to admit multiple nondegenerate positive steady states.
\end{theorem}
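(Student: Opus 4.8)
The plan is to mirror the proof of Theorem~\ref{thm:resolve-conj}, but the task is considerably easier because here we treat only the finitely many pairs $(m,n)$ with $m \in \{2,3,4,5\}$ and $n \in \{5,7,9,11\}$---sixteen cases in all---rather than an infinite family. Consequently, no asymptotic bounding argument is required: a direct, case-by-case verification will suffice, and in particular the quadratic-envelope estimates of Theorem~\ref{thm:resolve-conj} can be dispensed with entirely.

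First, for each pair $(m,n)$ I would instantiate the determinant optimization method of Section~\ref{sec:generate}. Concretely, choose $\lambda$ and $\epsilon$ (starting, say, from $\lambda=1$ and $\epsilon=0.1$ as in the $n=3$ case, adjusting if needed) so that the last coordinate $\eta^0_{2n}$ given by the closed form~\eqref{eq:eta-2n} is positive; Example~\ref{ex:eta-0} already guarantees such a choice exists for $n=5,7,9,11$ and $m\geq 2$. With $\eta^0$ fixed, compute the nullspace vector $\delta$ from~\eqref{eq:delta} (taking $\delta_1=1$, rescaled if necessary), and then obtain the explicit internal, outflow, and inflow reaction rates together with the two steady states $\mathbf{x^*}=(1,\dots,1)$ and $\mathbf{x^\#}=(e^{\delta_1},\dots,e^{\delta_n})$ via the formulas of Theorem~\ref{thm:ratescon}. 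The value $n=7$, which is absent from the list stated in Theorem~\ref{thm:ratescon}, is covered by that theorem's footnote, since positivity of $\eta^0_{2n}$ holds there by Example~\ref{ex:eta-0}.

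Second, I would verify that each generated rate vector is a valid certificate of multistationarity. The internal and outflow rates are automatically positive, and the only delicate rates are the inflows~\eqref{eq:inflow}; Craciun and Feinberg guarantee these are positive for sufficiently small scaling of $\delta$, so for each pair one checks positivity directly (scaling $\delta_1$ down should a particular case demand it), exactly as was done for $n=3$ in Remark~\ref{rmk:inflow-pos-3}. Having fixed explicit rates and steady states, I would then compute the two Jacobian determinants $\det(df(\mathbf{x^*}))$ and $\det(df(\mathbf{x^\#}))$ from~\eqref{eq:jac} for each of the sixteen pairs and confirm that both are nonzero. Since $\Gamma$ is full rank, this establishes nondegeneracy of $\mathbf{x^*}$ and $\mathbf{x^\#}$, and hence the theorem.

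The main obstacle is not conceptual but computational. Because the steady-state entries $e^{\delta_k}$ are transcendental, the two Jacobian determinants are irrational numbers, and one must ensure that the ``nonzero'' conclusion is genuine rather than an artifact of floating-point error---particularly near any parameter regime where a determinant happens to be small in magnitude. Since closed forms for $\delta$ and $\eta^0$ are available, this can be handled either by evaluating with verified interval arithmetic, or by exhibiting for each pair a small explicit margin that bounds the determinant away from zero, in direct analogy to the tabulated values recorded in Table~\ref{table:1} for the $n=3$ case.
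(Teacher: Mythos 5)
Your proposal matches the paper's proof: the paper likewise instantiates Theorem~\ref{thm:ratescon} for each pair (using $\delta_1=1$, $\lambda=1$, $\epsilon=0.001$) and then checks that $\det(df(\mathbf{x^*}))$ and $\det(df(\mathbf{x^\#}))$ are nonzero for $m=2,3,4,5$, reading this off the graphs in Appendix~B. Your added caution about certifying nonvanishing via interval arithmetic rather than floating-point or graphical inspection is, if anything, more careful than what the paper does.
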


\begin{proof}
We generate our rates and concentrations as in Theorem~\ref{thm:ratescon}: it is straightforward to check that $\delta_1 = 1$, $\lambda = 1$, and $\epsilon = 0.001$ satisfy all necessary hypotheses.  Thus, we obtain two steady states, $x^*= (1,1,\dots,1)$ and $x^\#$ defined in (\ref{concentration2}).  Then, as in the proof of Theorem~\ref{thm:ratescon}, we verify that the determinants $\det(df(x^{*}))$ and $\det(df(x^{\#}))$ are nonzero for $m=2,3,4,5$, which is readily seen from their graphs, which appear in Appendix B.  (In fact, the graphs strongly suggest that the conjecture holds completely for each of these values of $n$, namely $n=5,7,9,11$, i.e.\ for $m >5$ as well.)
\end{proof}

\section{Discussion} \label{sec:disc}
As stated in the introduction, deciding whether a chemical reaction network is multistationary is not easy in the general case. And even when we can confirm that a network is multistationary, there is no general technique to show that it will admit multiple {\em nondegenerate} steady states. Nonetheless, in this paper we succeeded in this task for certain sequestration networks $\widetilde{K}_{m,n}$ by using the determininant optimization method to obtain closed forms for reaction rates and steady states. 

Our work resolved the $n=3$ case of Conjecture~\ref{Conjecture}, and we believe that our results form an important step toward resolving the full conjecture.  Specifically, one could use the formulas for rates and steady states given in Theorem~\ref{thm:ratescon} to analyze the general case, or, perhaps easier, the case of some fixed $m$ and general $n$.
Two other possible approaches are to (1)
find an alternate method to obtain closed forms for the steady states of a chemical reaction network, or
(2) identify criteria that can guarantee that steady states are nondegenerate. 

Expanding on the last idea, our ultimate goal is to develop general techniques to assert that steady states of a chemical reaction network are nondegenerate. For instance, our analysis of the Jacobian determinants in this work suggest that even if the determinant optimization method yields a degenerate steady state, then the rate constants can be perturbed slightly so that the degenerate steady state becomes nondegenerate (and the other steady state also remains nondegenerate).  Is this true for any network for which the determinant optimization method applies?  If so, then this would completely resolve Conjecture~\ref{Conjecture}, and, more generally, this would enable us to more readily ``lift'' multistationarity and thereby enlarge our catalogue of known multistationary networks.

\subsection*{Acknowledgements}
\noindent
BF and ZW conducted this research as part of the NSF-funded REU in the Department of Mathematics at Texas A\&M University (DMS-1460766), in which AS served as mentor.  All authors contributed substantially to this work.  
AS was supported by the NSF (DMS-1312473).  The authors thank Dean Baskin for help with the proof of Lemma~\ref{r2}, and Maya Johnson, Badal Joshi, Emma Owusu
Kwaakwah, Casian Pantea, Xiaoxian Tang, and Jacob White for advice and fruitful discussions.  The authors also thank an anonymous referee.


\bibliography{mybibfile}

\section*{Appendix A: proofs of Lemmas \ref{r2} and \ref{r6}.}

\begin{lemma}[Lemma \ref{r2}]
The function $r_2(m)= \frac{1.31}{e^{\frac{1.31}{m+1}}-1} $ satisfies the following inequalities for all $m \geq 2$:
$$m + 1 ~>~ r_2(m) ~\geq~ m ~.
$$
\end{lemma}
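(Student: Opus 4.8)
The plan is to reduce both inequalities to elementary bounds on the single-variable function $u \mapsto u/(e^u-1)$. First I would reparametrize: set $t := m+1$ and $u := 1.31/t$, so that $tu = 1.31$ and
\[
r_2(m) \;=\; \frac{1.31}{e^{u}-1} \;=\; t\cdot\frac{u}{e^{u}-1}.
\]
Since $m \geq 2$ gives $t \geq 3$ and hence $u>0$, the claim $m \leq r_2 < m+1$ becomes $t-1 \leq t\cdot\frac{u}{e^u-1} < t$, so it suffices to bound $\frac{u}{e^u-1}$ above by $1$ and below by a quantity slightly larger than $1-\tfrac1t$. In fact I will obtain a cleaner affine lower bound in $m$.

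For the upper bound I would invoke the standard inequality $e^u > 1+u$ for $u>0$, which gives $e^u - 1 > u$ and hence $\frac{u}{e^u-1} < 1$. Multiplying by $t>0$ yields $r_2 < t = m+1$, as required; no restriction on $m$ beyond $t>0$ is actually needed here.

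The lower bound is the main obstacle, and I would handle it through the sharper inequality
\[
\frac{u}{e^{u}-1} \;\geq\; 1-\frac{u}{2} \qquad (u>0).
\]
This is strong enough because, multiplying by $t$, it produces $r_2 \geq t\bigl(1-\tfrac{u}{2}\bigr) = t - \tfrac{tu}{2} = (m+1) - \tfrac{1.31}{2} = m + 0.345 > m$; the crucial point is that the factor $tu = 1.31$ is exactly the constant appearing in $r_2$, which is what makes the estimate collapse to a clean bound affine in $m$. To prove the displayed inequality I would clear denominators: since $e^u-1>0$, it is equivalent to $\psi(u) := u(1+e^u) - 2(e^u-1) \geq 0$ for $u \geq 0$. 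Here $\psi(0)=0$, $\psi'(u) = 1 + e^u(u-1)$ with $\psi'(0)=0$, and $\psi''(u) = u\,e^u \geq 0$ on $[0,\infty)$. Hence $\psi'$ is nondecreasing and therefore nonnegative on $[0,\infty)$, so $\psi$ is nondecreasing with $\psi(0)=0$, giving $\psi(u)\geq 0$. This establishes the lower bound (with room to spare, $r_2 > m+0.345$), and together with the upper bound completes the proof. The only step requiring genuine care is the sign analysis of $\psi$ via its second derivative; everything else is direct substitution.
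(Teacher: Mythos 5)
Your proof is correct, but it takes a genuinely different (and arguably cleaner) route than the paper's. For the upper bound, the paper argues via the monotone convergence of $\bigl(1+\tfrac{y}{x}\bigr)^{x}$ to $e^{y}$ from below, whereas you simply use $e^{u}>1+u$; both are fine, yours is more direct. The real divergence is in the lower bound. The paper rewrites $r_2\geq m$ as $\tfrac{m+1.31}{m}\geq e^{1.31/(m+1)}$, substitutes $z=1/m$, and proves $\log(1+az)\geq \tfrac{az}{1+z}$ on $(0,1/2)$ by splitting $\int_0^{az}\bigl(\tfrac{1}{1+t}-\tfrac{1}{1+z}\bigr)\,dt$ into three pieces and estimating them against each other --- an argument tailored to the range $m\geq 2$ and to the specific constant $a=1.31<2$. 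You instead isolate the universal inequality
\[
\frac{u}{e^{u}-1}\;\geq\;1-\frac{u}{2}\qquad(u>0),
\]
prove it by the routine sign analysis $\psi(0)=\psi'(0)=0$, $\psi''(u)=ue^{u}\geq 0$ (which I verified is correct), and exploit the fact that $tu=1.31$ is constant so that the bound collapses to $r_2\geq (m+1)-\tfrac{1.31}{2}=m+0.345$. This buys you three things the paper's argument does not deliver: a strictly stronger conclusion ($r_2>m+0.345$ rather than $r_2\geq m$), validity for all $m>0$ rather than only $m\geq 2$, and a proof whose only nontrivial ingredient is a standard one-variable convexity computation rather than an ad hoc integral decomposition. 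The approach is sound as written.
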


\begin{proof}
For the upper bound, first observe that 
$$\log \left( \left(1 + \frac{1.31}{m+1} \right)^{m+1} \right) ~<~ \log(e^{1.31}) ~=~ 1.31 \quad \quad {\rm for~all~} \ m \geq 0~,$$
since $\lim_{x \rightarrow \infty} (1 + \frac{y}{x})^x$ converges to $e^y$ from below for positive values of $y$. Thus:
\begin{align*}
1.31 ~>~ & \ \log \left( \left( 1 + \frac{1.31}{m+1} \right)^{m+1} \right)  \\
 ~>~ & \ \log \left( \left( \frac{m+2.31}{m+1} \right) ^{m+1} \right)
 ~=~(m+1) \log \left( \frac{m+2.31}{m+1} \right)~,
 \end{align*}
which implies that
\begin{align*}
\frac{1.31}{m+1} & ~>~ \log \left( \frac{m+2.31}{m+1} \right) \\
\implies e^\frac{1.31}{m+1} & ~>~ \frac{m+2.31}{m+1} \\
\implies e^\frac{1.31}{m+1}(m+1) - (m + 1) &~>~ 1.31~. 
\end{align*}
and this final inequality implies our desired upper bound:  $m + 1 ~>~ \frac{1.31}{e^\frac{1.31}{m+1} - 1} = r_2(m)$.

Notice that our lower bound is equivalent
to the following inequality:
\begin{align} \label{eq:lower-bd-rewritten}
\frac{m + 1.31}{m} ~\geq~ e^{\frac{1.31}{m+1}} \quad \quad {\rm for~all~} m \geq 2~.
\end{align}
We set $a = 1.31$, make use of the change of variables $z = \frac{1}{m}$, and then 
apply $\log$ to see that our desired inequality~\eqref{eq:lower-bd-rewritten} is equivalent to:
\begin{align*}
\log(1 + az)~ \geq ~ \frac{a}{z^{-1} + 1} ~=~ \frac{az}{1 + z} \quad \quad {\rm for~all~} z \in (0,1/2)~.
\end{align*}
We will show that $\log(1 + az) -\frac{az}{1 + z} \geq 0$. 
To this end, define $b$ by $1 - b = a - 1$, and notice that $1 > b > \frac{1}{2} > (a-1)$. 
Next, note that we have the following equalities:
{\tiny
\begin{align} \label{eq:integrals}
\log(1 + az) -  \frac{az}{1 + z} \nonumber ~&=~ \int_0^{az} \left( \frac{1}{1+t} - \frac{1}{1+z} \right) dt
\\
~&=~ \int_0^{bz} \frac{z - t}{(1+z)(1+t)} dt + \int_{bz}^z \left( \frac{1}{1+t} - \frac{1}{1+z} \right)  + \int_z^{az} \frac{z - t}{(1+z)(1+t)} dt~.
\end{align}
}%
The second integral in~\eqref{eq:integrals} is nonnegative (because its integrand is nonnegative), so we complete the proof now by showing that the sum of the first and third integrals in~\eqref{eq:integrals} is nonnegative:
\begin{align*}
\int_0^{bz} \frac{z - t}{(1+z)(1+t)} dt + 
    \int_z^{az} \frac{z - t}{(1+z)(1+t)} dt 
~\geq~ \frac{(1-b)z^2b}{(z+1)^2} +     
    \frac{-(a-1)^2z^2}{(z+1)^2}  ~\geq~0~,
\end{align*}
where the two inequalities come from recalling that $b <1$, and, respectively, $(1 - b) = (a - 1)$ and $b \geq (a - 1)$.
\end{proof}

\begin{lemma}[Lemma \ref{r6}] 
The function $r_6(m)=\frac{m - 1.31}{e^{\frac{2.1m + 3.41}{m+1}} - 1}$ satisfies:
\begin{align} \label{eq:ineq-2}
0.14m ~>~ r_6(m) ~>~ 0.13m- 0.5~,
\end{align}
where the upper bound holds for $m \geq 2$, and the lower bound holds for $ m \geq 20$.
\end{lemma}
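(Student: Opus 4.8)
The plan is to prove the two inequalities separately, reducing each to a squeeze on the denominator of $r_6$. First I would rewrite the exponent to isolate its $m$-dependence: since $\frac{2.1m+3.41}{m+1} = 2.1 + \frac{1.31}{m+1}$, we have
\[ r_6(m) = \frac{m-1.31}{e^{2.1 + 1.31/(m+1)} - 1}. \]
Because $\frac{1.31}{m+1}>0$, the exponent always exceeds $2.1$ and decreases monotonically to $2.1$ as $m\to\infty$; hence the denominator is itself decreasing in $m$, lying strictly between $e^{2.1}-1$ (its infimum) and its value at the smallest admissible $m$. Both bounds on $r_6$ will then follow by squeezing this denominator between two explicit constants.

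For the upper bound (all $m\geq 2$), I would note that the denominator exceeds $e^{2.1}-1$ while the numerator satisfies $0 < m-1.31 < m$, so
\[ r_6(m) < \frac{m}{e^{2.1}-1}. \]
It then suffices to check the numerical inequality $e^{2.1}-1 > 1/0.14 = 7.1428\ldots$, i.e.\ $e^{2.1} > 8.1428\ldots$, which holds comfortably since $e^{2.1}\approx 8.166$; this gives $r_6(m) < 0.14m$.

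For the lower bound (valid for $m\geq 20$), I would use monotonicity: for $m\geq 20$ the exponent is at most $2.1 + \frac{1.31}{21}$, so the denominator is at most $D_0 := e^{2.1 + 1.31/21} - 1$. Writing
\[ r_6(m) \geq \frac{m-1.31}{D_0} = \frac{m}{D_0} - \frac{1.31}{D_0}, \]
the claimed bound $r_6(m) > 0.13m - 0.5$ follows once we establish $D_0 < 1/0.13$ (so that $\frac{m}{D_0} > 0.13m$) together with $\frac{1.31}{D_0} < 0.5$ (immediate, as $D_0 > 7$).

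The main obstacle is the inequality $D_0 < 1/0.13$, i.e.\ $e^{2.1 + 1.31/21} < 1 + 1/0.13 = 8.6923\ldots$. This is extremely tight: the true value is $e^{2.1623\ldots}\approx 8.6918$, a margin of only about $5\times 10^{-4}$. To certify it rigorously I would bound $e^{2.1+1.31/21}$ from above by a truncated Taylor expansion with explicit control of the remainder (or by an exactly-certified rational over-approximation), rather than relying on a decimal estimate. The tightness is structural: the effective slope $1/D_0$ must stay above $0.13$, and since the asymptotic slope of $r_6$ is $1/(e^{2.1}-1)\approx 0.13954$, the constant-denominator argument leaves essentially no slack. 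Everything else—the positivity of the numerator, the decreasing behaviour of the denominator, and the two supporting numerical checks for the upper bound and for $\frac{1.31}{D_0}<0.5$—is routine.
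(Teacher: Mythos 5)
Your proposal is correct and follows essentially the same route as the paper's proof: both bounds are reduced, via monotonicity of the exponent $2.1 + \frac{1.31}{m+1}$ in $m$, to the numerical facts $e^{2.1} > 8.143$ and $e^{45.41/21} < 8.692$, the only cosmetic difference being that for the lower bound the paper clears the denominator and checks the single linear inequality $1.13m - 1.81 \geq 8.692(0.13m - 0.5)$ (which reduces to $0.00004m \geq -2.536$) rather than splitting into your separate slope and intercept conditions. Your observation that the lower bound is tight to within about $5\times 10^{-4}$ is accurate and corresponds exactly to the paper's razor-thin slope margin $1.13 - 8.692\cdot 0.13 = 0.00004$.
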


\begin{proof}
We first prove the upper bound.  
By the second inequality in (\ref{x3}),  $(e^{\frac{2.1m + 3.41}{m+1}} - 1) > 0$ for $m \geq 2$. Thus our desired upper bound in~\eqref{eq:ineq-2} is equivalent to the following:
$$m(0.14e^{\frac{2.1m + 3.41}{m+1}} - 1.14)~>~ -1.31 ~,$$
which holds (for positive $m$) whenever $(0.14e^{\frac{2.1m + 3.41}{m+1}} - 1.14) > 0$. 
This inequality in turn is equivalent to the following (since $\log$ is an increasing function):
$$2.1m + 3.41 ~>~ (m+1)\log\left(\frac{1.14}{0.14}\right) ~\approx~ (m+1)(2.10)~,$$
which is true for positive $m$, so the proof of the upper bound is complete.

For the lower bound, 
by clearing the denominator and gathering exponential terms on the right-hand side, we see that the desired inequality is equivalent to the following:
$$1.13m - 1.81 ~\geq~ e^{\frac{2.1m + 3.41}{m+1}} (0.13m - 0.5)~.$$
We prove this now.  The first inequality below is equivalent to the inequality $.00004m \geq -2.536$, which is true for positive $m$:
\begin{align*}
1.13m - 1.81 ~&\geq~
8.692(0.13m - 0.5) \\
~&>~ e^{\frac{2.1(20) + 3.41}{(20)+1}}(0.13m - 0.5) 
~\geq~
e^{\frac{2.1m + 3.41}{m+1}} (0.13m - 0.5)~,
\end{align*}
and the final inequality holds for $m \geq 20$ because $e^{\frac{2.1m + 3.41}{m+1}}$ is a decreasing function for positive values of $m$.
\end{proof}

\section*{Appendix B: graphs for the proof of Thoeorem~\ref{thm:small-m-n}}
Figures~\ref{fig:sfig1}--\ref{fig:sfig4} below present the graphs of the determinant of the Jacobian matrix evaluated at the steady states $x^*$ and $x^{\#}$ (as described in the proof of Theorem~\ref{thm:small-m-n}) for the network $\widetilde{K}_{m,n}$ for odd $5 \leq n \leq 11$ as functions of $m$. 
Note that the graphs are nonzero for $2 \leq m \leq 5$, confirming Conjecture~\ref{Conjecture} for odd $5 \leq n \leq 11$ and those values of $m$.  Also, the graphs strongly suggest that the conjecture holds for larger $m$ as well.  All graphs were made using {\tt Desmos Graphing Calculator}~\cite{desmos}.

\begin{figure}[ht]
\begin{subfigure}{.5\textwidth}
  \centering
  \includegraphics[width=.8\linewidth]{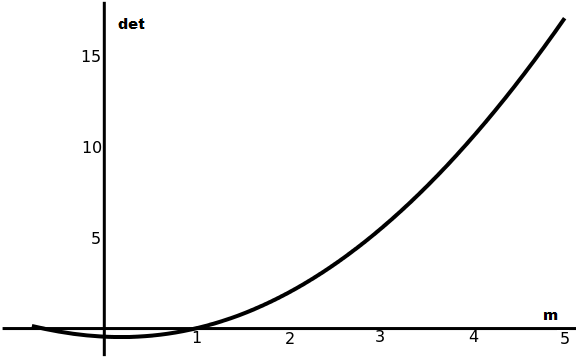}
  \caption{$\det(df(\mathbf{x^{*}}))$}
  \label{fig:sfig1}
\end{subfigure}
\begin{subfigure}{.5\textwidth}
  \centering
  \includegraphics[width=.8\linewidth]{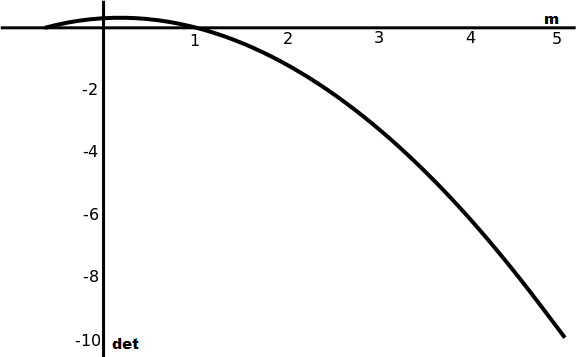}
  \caption{$\det(df(\mathbf{x^{\#}}))$}
  \label{fig:sfig2}
\end{subfigure}
\caption {$\widetilde{K}_{m,5}$}
\label{fig:fig}
\end{figure}
\begin{figure}[ht]
\begin{subfigure}{.5\textwidth}
  \centering
  \includegraphics[width=.8\linewidth]{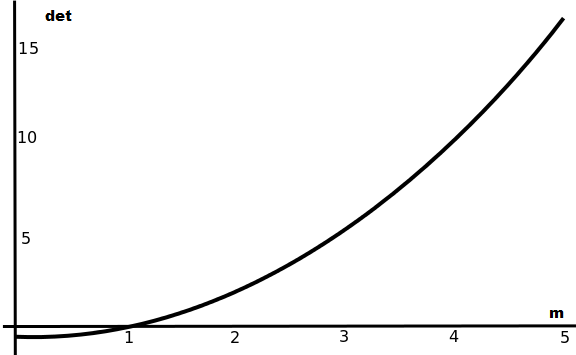}
  \caption{$\det(df(\mathbf{x^{*}}))$}
  \label{fig:sfig4}
\end{subfigure}
\begin{subfigure}{.5\textwidth}
  \centering
  \includegraphics[width=.8\linewidth]{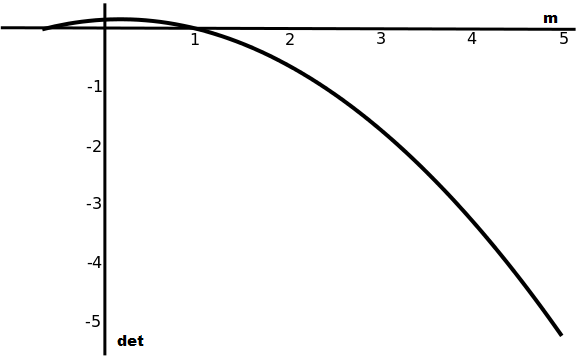}
  \caption{$\det(df(\mathbf{x^{\#}}))$}
  \label{fig:sfig3}
\end{subfigure}
\caption{$\widetilde{K}_{m,7}$}
\label{fig:fig2}
\end{figure}
\begin{figure}[ht]
\begin{subfigure}{.5\textwidth}
  \centering
  \includegraphics[width=.8\linewidth]{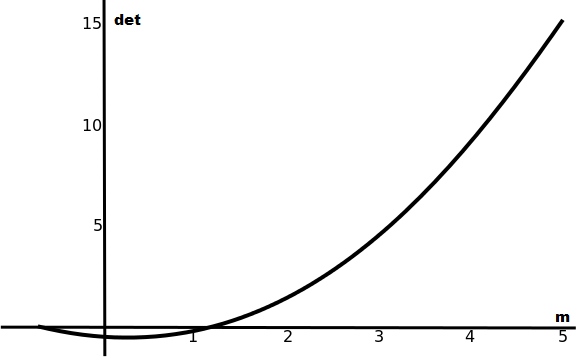}
  \caption{$\det(df(\mathbf{x^{*}}))$}
  \label{fig:sfig6}
\end{subfigure}
\begin{subfigure}{.5\textwidth}
  \centering
  \includegraphics[width=.8\linewidth]{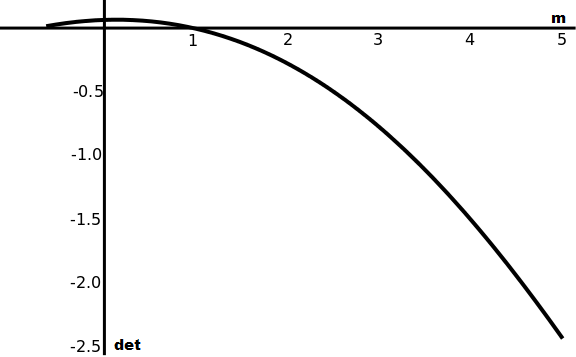}
  \caption{$\det(df(\mathbf{x^{\#}}))$}
  \label{fig:sfig5}
\end{subfigure}
\caption{$\widetilde{K}_{m,9}$}
\label{fig:fig3}
\end{figure}
\begin{figure}[ht]
\begin{subfigure}{.5\textwidth}
  \centering
  \includegraphics[width=.8\linewidth]{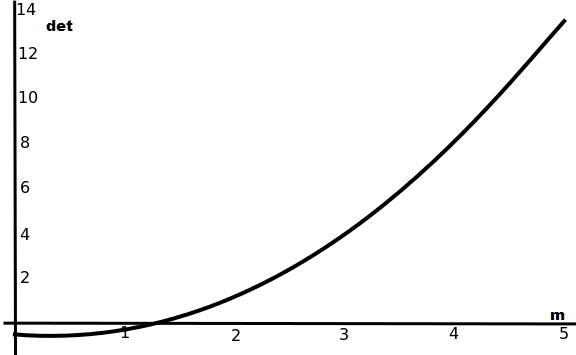}
  \caption{$\det(df(\mathbf{x^{*}}))$}
 \label{fig:sfig8}
\end{subfigure}
\begin{subfigure}{.5\textwidth}
  \centering
  \includegraphics[width=.8\linewidth]{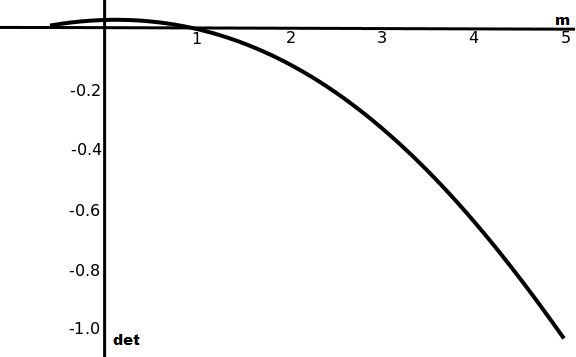}
  \caption{$\det(df(\mathbf{x^{\#}}))$}
  \label{fig:sfig7}
\end{subfigure}
\caption{$\widetilde{K}_{m,11}$}
\label{fig:fig4}
\end{figure}

\end{document}